\newcommand{\aaa}{{\mathcal A}}
\newcommand{\bbb}{{\mathcal B}}
\newcommand{\lcal}{{\mathcal L}}
\newcommand{\ppp}{{\mathcal P}}
\newcommand{\sss}{{\mathcal S}}
\newcommand{\rrr}{{\mathcal R}}
\newcommand{\numa}{{|\mathcal A|}}
\newcommand{\numb}{{|\mathcal B|}}
\newcommand{\T}{{\bf T}} %proxy for tiling
\newcommand{\Tsp}{{\Omega}} %proxy for tiling space
\newcommand{\Sol}{{\mathbb S}} % proxy for solenoid
\newcommand{\tile}{{t}} %proxy for tile
\newcommand{\patch}{{P}} %proxy for patch
\newcommand{\protoset}{{\mathcal{P}}} %proxy for prototile set
\newcommand{\M}{{A}} %proxy for substitution matrix
\newcommand{\R}{{\mathbb R}}
\newcommand{\N}{{\mathbb N}}
\newcommand{\Z}{{\mathbb Z}}
\newcommand{\C}{{\mathbb C}}
\newcommand{\vece}{{\vec{e}}}
\newcommand{\vecv}{{\vec{v}}}
\newcommand{\transversal}{{\Xi(\Tsp)}}
\def\supp{\text{supp}}
\def\lab{\text{label}}
\newtheorem{thm}{Theorem}[section]
\newtheorem*{thm*}{Theorem}
\newtheorem{cor}[thm]{Corollary}
\newtheorem{lem}[thm]{Lemma}
\newtheorem{prop}[thm]{Proposition}
\newtheorem{dfn}[thm]{Definition}
\theoremstyle{definition}
\newtheorem{ex}{Example}
\begin{document}

\title{Tilings with infinite local complexity}
\author{Natalie Priebe Frank}
\date{\today}
\address{Natalie Priebe Frank\\Department of Mathematics\\Vassar
  College\\Poughkeepsie, NY 12604} \email{nafrank@vassar.edu}
  
  \thanks{Some work presented here was done in collaboration with Lorenzo Sadun and Ian Putnam, both of whom the author thanks for their hospitality and many illuminating discussions.}
\maketitle

\begin{abstract}
This is a chapter surveying the current state of our understanding of tilings with infinite local complexity.  It is intended to appear in the volume {\em Directions in Aperiodic Order}, D. Lenz, J. Kellendonk, and J. Savienen, eds, Birkhauser.
\end{abstract}

\section{Introduction}
%\subsection{History} 

Most of the literature on tiling spaces and their dynamical systems has focused on those with finite local complexity (FLC).  In this paradigm there is a finite set $\ppp$ of tiles called `prototiles', congruent copies of which are used to cover the plane (or $\R^d$) without gaps or overlaps.  Moreover, the adjacencies between tiles are restricted so that there are only finitely many two-tile configurations.  If there can be infinitely many two-tile configurations in a tiling, then that tiling is said to have infinite local complexity (ILC).  

When tilings are looked at from a physical perspective it makes sense to consider not just individual tilings but rather spaces whose elements are tilings that share some common properties.  These tiling spaces are given a metric topology where the distance between two tilings is defined by how similar they are in balls around the origin (see Section \ref{metric.section} for precision).  When there are only finitely many two-tile configurations that are found in any tiling of a tiling space, we say that the tiling space itself has finite local complexity; otherwise, it has infinite local complexity.  FLC tiling spaces have been the standard objects used to model the atomic structure of crystals and quasicrystals and have proved quite effective in the study of statistical properties, diffraction patterns, and energy spectra of aperiodic solids.  

A tiling space that is of finite local complexity can be homeomorphic to one with infinite local complexity \cite{Radin-Sadun}.  Thus finite local complexity is not a topological invariant and should not be considered an intrinsic property when topological methods are used to study aperiodic tilings.

 Examples of tilings with infinite local complexity have appeared sporadically \cite{Danzer,Me.Robbie,Me.Lorenzo,Kenyon.rigid,Radin.pinwheel,Sadun.pin}, and it is increasingly clear that the class isn't as unnatural as previously imagined.  Moreover, most of the `usual' FLC tools and techniques can be used in the ILC case, and one of the goals of this chapter is to explain exactly how to adapt the existing machinery.  We take as fundamental the requirement that prototiles come from {\em compact}, not necessarily finite, sets.  This means that both the `supports' of the tiles (i.e., their underlying sets in $\R^d$) and the `labels' of the tiles (which are used to distinguish tiles with congruent supports) must come from compact sets.  We will see that this fundamental requirement means that ILC tiling spaces are compact (see Section \ref{TilingSpaces.sec}).  We delay formal definitions until Section \ref{sec:tiling.defs} and provide some informal examples now.
  
 \subsection{Introductory examples}
 Since one-dimensional tiles are closed intervals, any tiling made from a finite number of interval lengths with a finite number of labels must have finite local complexity.   So  in order to have infinite local complexity in one dimension there must be either an infinite label set or an infinite number of lengths (or both).  
 
 \begin{ex}A first example is to allow tiles to take lengths from some closed interval, for instance we could require that $1 \le $ length $\le 3$.
We can let the support of a prototile $p_x$ be the interval $[0,x]$, and we can label the tile by its length, $x$.  It is convenient to omit the label when it is possible to tell tiles apart by their supports, as is the case here, but we are including them for consistency with the definitions provided in Section \ref{sec:tiling.defs}.  The prototile $p_x$ is formally the pair $([0,x],x)$, and the prototile set in this example 
is thus $\ppp=\{p_x, x \in [1,3]\}$.  Notice that the set of supports of prototiles is compact in the Hausdorff metric and the set of labels is compact in the usual distance metric in $\R$.  Importantly, if a sequence of prototiles has a convergent label sequence, then their supports converge as well.  This makes it possible to say that the sequence of prototiles themselves converge.

A tile is simply a translate of $p_x$ by some element $y \in \R$; we write $t=p_x +y = ([y,y+x],x)$.  (Note that translation changes the support of a tile but not its label.) We could make a tiling from  such tiles in any number of ways, for instance by generating a sequence of random numbers in $[1,3]$ and laying down tiles of those lengths in any order.  With probability one such a tiling will have infinite local complexity because it has infinitely many different tiles and thus has infinitely many different two-tile patterns.  
 \end{ex}

This example provides a nice test case for computation since it is really different than the standard FLC situation, but still quite simple.  Example \ref{Variable.size.example} consists of a hierarchical tiling space based on a prototile set derived from $\ppp$.  We introduce its construction in Section \ref{substitution} and give it a thorough analysis in Section \ref{vss.analysis}.
  
\begin{ex} For another one-dimensional example we take a single interval length for the supports but allow for infinitely many labels.  Suppose that the support of every prototile is $[0,1]$, but that each prototile takes a label from some compact label set $\lcal$.  For concreteness, let $\lcal = S^1= \R/\Z$, the unit circle.  We cannot tell two prototiles apart by their supports, so the label tells us when two of them are different, and the distance between their labels tells us how different they are.  

An interesting way to construct a tiling from this prototile set is to fix an element $\alpha \in S^1$ consider the sequence of labels $x + n\alpha \mod 1$ for any $x \in \R$.  If $\alpha$ is irrational then the tilings generated by this label sequence will have infinite local complexity since $\{n \alpha \mod 1\}$ is infinite (and in fact uniformly distributed).
\end{ex}

\begin{ex}
A two-dimensional example of an ILC tiling can be constructed from unit squares.  Tile the plane in rows of tiles, but offset each row from the next by a randomly chosen number in [0,1].   With probability one, the result will be an ILC tiling.  A non-random variation on this theme is to base the offsets on some fixed irrational number $\alpha$.  Lay the first row of squares along the $x$-axis with a vertex at the origin.  Place an endpoint of the row at height $y=1$ at $x=\alpha$, and the endpoint of the row at height $y = n$ at $x = n \alpha$.  Since $\{n \alpha \mod 1\}$ is uniformly distributed in $[0,1]$, the offsets between rows will be too and in this way form an ILC tiling of the plane.
\end{ex}

\begin{ex}
A well-known example that has infinite local complexity up to translations is the pinwheel tiling.  Pinwheel tiles appear in infinitely many orientations in any individual pinwheel tiling and so there are not finitely many different two-tile patches
that are translates of one another.  This is a borderline case, however:  the tiles fit together in finitely many ways even though these allowed configurations appear in infinitely many orientations.
It is sometimes useful, then, to consider the pinwheel tiling space to be of finite local complexity by allowing rotations along with translations. 
\end{ex}

\subsection{Ways infinite local complexity arises.}

In higher dimensions there are many natural examples of tilings with infinite local complexity.
For instance, the atomic structure of an ideal crystal is modeled by a lattice of points, but the atoms in an actual crystal appear within a certain tolerance of that lattice.  A standard perspective to let the atomic structure generate a tiling, either by using the atoms as tile vertices or, by taking the Vorono\"{\i} tessellation of the set of atomic locations, or by some other method.   In the case of an ideal crystal, all methods yield periodic tilings with patches of tiles forming unit cells.   In the case of an actual crystal, however, these tiles will be deformed within a certain tolerance and we will have an infinite number of tile shapes.

Infinite local complexity has long been known to arise even when there are a finite number of tile shapes.  
There are tilings with a finite number of tile types inside of which `fault lines' develop.  Defined formally in Section \ref{Fault.and.fractured}, a fault line separates a tiling into half-tilings that can slide parallel to the fault line to produce new tilings from the tiling space.
The presence of fault lines often result in an infinite number of local adjacencies. 
If we were to encode adjacency information as labels for the tiles, then we would have infinitely many labels.   If a tiling of $\R^2$ has only a finite number of tile shapes up to Euclidean motions,  it is proved in \cite{Kenyon.rigid} that there are only two ways ILC can appear:  either along a fault line or along a fault circle.    The former case requires tiles that have a straight edge somewhere, while the latter requires tiles with an edge that is a circular arc of some given radius.

%\subsection{An array of questions}
%
%In any of these situations we may or may not have a tiling space that is homeomorphic to one with finite local complexity.  This question can be resolved for specific examples by appealing to Theorem \ref{One-D-FLCresult}.

\subsection{Outline of this chapter}
Section \ref{sec:tiling.defs} contains the details on how we conceive of tiles, tilings, tiling spaces, and the tiling metric in the presence of infinite local complexity.  Our definitions coincide with those for finite local complexity tilings when that condition is satisfied.

Section \ref{sec:ergodic} addresses basic analysis of ILC tiling dynamical systems. The translation dynamical system is defined and we explain what minimality, repetitivity, and expansivity mean in this context.  The notion of `cylinder sets' is adapted from symbolic and FLC dynamics, and we show how to deal with some subtle yet important details that impact how they are used.  We show how to think about translation-invariant measures and their relationship to patch frequency.  Finally we discuss how to generalize the notions of entropy and complexity to this situation.

Tilings with a hierarchical structure generated by substitution or fusion are the topic of Section \ref{sec:hierarchical}.  The construction methods adapt pretty much directly from the FLC case, except care must be taken to preserve compactness of supertile sets.  Transition matrices, so useful in frequency computations for FLC self-similar and fusion tilings, need to be dealt with as transition maps instead.  The idea of recognizability takes little work to adapt to the ILC case, but primitivity requires some care.

Existing results on ILC tiling spaces are collected into Section \ref{results.section}.  We give a `fault lines' a proper definition, and since they aren't topologically invariant we introduce the related idea of `fractured' tiling spaces.  The effect of fault lines and fractures on the topological spectrum is explained before we move on to results specific to the hierarchical tilings case.  The fact that primitivity continues to imply minimality is proved and conditions are given that make the converse true as well.  We also explain how to think about the invariant measures for fusion systems.  In the special case of fusion tilings with strictly finite supertile sets we show the similarity to FLC fusion tilings.  Finally we tell everything that is currently known on the important question ``When is an ILC tiling space homeomorphic to an FLC tiling space?"

In Section \ref{example.analysis} we apply our toolbox to three different examples.  A point of interest that does not appear elsewhere in the literature is how to see certain tilings (`direct product variations') as projections of stepped, branched surfaces in higher dimensions and how that can give rise to infinite local complexity.

The paper concludes with two main categories of questions about tilings with infinite local complexity.  One of these has already been mentioned, the question of when an ILC tiling space is homeomorphic, or even topologically conjugate to, an FLC tiling space.  The other type is about how the geometric and combinatorial aspects of the tiles or tilings affect the dynamical, measure-theoretic, or topological properties of their tiling spaces.

\section{Compact tiling spaces} \label{sec:tiling.defs}
\subsection{Tiles, patches, and tilings} 

There are two main ingredients for tiles in a tiling with infinite local complexity: supports and labels.  The support is the underlying set in $\R^d$ and the label can be thought of as distinguishing between tiles that have congruent supports, perhaps by color or by orientation.  Often it is convenient to more or less ignore the labels but since they are quite handy we include them as a fundamental part of our definition.  The support and label sets must work together in a precise way in order to define a coherent prototile set that can be used to construct infinite tilings via translation.

Let $\sss$ denote a set of subsets of $\R^d$, each of which is a topological disk containing the origin in its interior.  Assume $\sss$ is a compact metric space under the Hausdorff metric, in which case $\sss$ can serve as a set of prototile supports.  Let $\lcal$ be another compact metric space, to be used as the prototile label set.  Let $sp:\lcal \to \sss$ be a continuous surjection called the {\em support map} that assigns to each label a set in $\R^d$ that serves as the physical tile itself.

\begin{dfn}\label{prototile.def}
A {\em prototile} is a pair $p=(S,l)$, where $S \in \sss$, $l \in \lcal$, and $S = sp(l)$.  We call $S$ the {\em support} and $l$ the {\em label} of $p$.  A prototile set $\ppp$ is the set of all prototiles associated to a given label set, support set, and support map.
\end{dfn}

Since the support map $sp$ is continuous we have the property that if a sequence of labels converges in $\lcal$, their corresponding supports converge in $\sss$.  This will give us a way to talk about convergence of prototiles and compactness of the prototile set.

The primary action on tiles will be by {\em translation} by $x \in \R^d$.  If $p = (S,l) \in \ppp$ we define the {\em $\ppp$-tile} or just {\em tile} $t = p-x$ to be the pair $(S-x, l)$.  That is, we translate the support of $p$ to a different location but keep the label the same.   As for prototiles, tiles have supports and labels;  the support of the above tile $t$ is the set $\supp(t)=S-x$ and the label of $t$ is $l$.  Given an arbitrary tile $t$, we have support and label maps such that $\supp(t) \subset \R^d$ and $\lab(t) \in \lcal$. 

A handy concept in tiling theory is that of the {\em control point} of a tile $t = p - x$, where $p \in \ppp$ and $x \in \R^d$, which is defined simply to be the point $x$.  This point represents the location in $t$ of the origin in $p$ and gives us a point of reference for each tile.

\begin{dfn} A finite union of $\ppp$-tiles whose supports cover a connected region and intersect only on their boundaries is called a {\em patch}.   
\end{dfn}
We can write $P = \bigcup_{k = 1}^n t_k$, where $\bigcup_{k = 1}^n \supp(t_k)$ is connected and $\supp(t_i) \cap \supp(t_j)$ is either empty or contains only boundary points whenever $i \neq j$.
Like tiles, patches can be translated and we define $P-x =\bigcup_{k = 1}^n (t_k-x)$.   Two patches are said to be {\em equivalent} if they are translates of one another.

\begin{dfn} An infinite union of $\ppp$-tiles whose supports cover the entirety of $\R^d$ and whose pairwise intersections contain only boundary points is called a {\em tiling} $\T$. 
\end{dfn}
 Like patches and tiles, a tiling can be translated by an element $x \in \R^d$ by translating each tile of $\T$ by $x$.  This produces a new tiling we denote by $\T-x$.  Precisely, if $\T = \bigcup_{i \in \Z} \tile_i$ is a tiling expressed as a union of tiles, then we write $\T-x = \bigcup_{i \in \Z} (\tile_i - x)$, where $\tile_i - x = (\supp(\tile_i) - x,\lab(\tile_i))$. This results in an exact copy of the tiling $\T$, except moved so that what was at the point $x$ is now at the origin.

\subsection{Tile, patch, and tiling metrics} \label{metric.section} In order to understand tiling spaces we need to know how to measure the distance between tiles, patches, and tilings.    To simplify notation (but not add confusion, we hope) we will use $d(x,y)$ to denote distance where $x$ and $y$ are tiles, patches, or tilings.  Each builds on the last.

The distance between two tiles $\tile_1$ and $\tile_2$ is the maximum of the Hausdorff distance between the supports of the tiles and the difference between the labels:
 \begin{equation}
d(\tile_1,\tile_2) = \max(d_H(\supp(\tile_1), \supp(\tile_2)),d_L(\lab(\tile_1),\lab(\tile_2)))
\end{equation}
The distance between two patches $\patch_1$ and $\patch_2$ can be computed provided the tiles are in one-to-one correspondence.  
Suppose $G$ is the set of all bijections $f$ assigning a tile from $\patch_1$ to a tile from $\patch_2$. In this case we define
\begin{equation}
d(\patch_1,\patch_2) =min_{f \in G}\{ max_{\tile \in \patch_1}\{d(\tile,f(\tile))\} \}
\end{equation}
Intuitively, we take the bijection that makes the best fit between the two patches and then consider the maximum distance between tiles paired by the bijection.  In the FLC case patches are always matched up by a congruence, usually a translation, in which case the distance is the length of the translation vector.  In the ILC case it is necessary to let the tiles move independently from one patch to the other.

The metric for tilings is based on the patch metric and says that two tilings are close if they very nearly agree on a big ball around the origin.
For two tilings $\T_1$ and $\T_2$ we define
\begin{equation}
d(\T_1,\T_2) = inf_{\epsilon>0}\left\{\exists \patch_1\subset \T_1 \text{ and } \patch_2 \subset \T_2 \, | \, B_{1/\epsilon}(0) \subset \supp(\patch_i) \text{ and } d(\patch_1,\patch_2) < \epsilon \right\}
\end{equation}
%we define $d(\T_1, \T_2)$ to be the infimum of the set of $\epsilon > 0$ such that there is a $\T_1$-patch $P_1$ and a $\T_2$-patch $P_2$ such that both $P_1$ and $P_2$ contain $B_{1/\epsilon}(0)$ and $d(P_1, P_2) < \epsilon$.    
provided such an $\epsilon$ exists and is not greater than 1.
If there is no such $\epsilon$, or if  the infimum is greater than 1, we define the distance between the tilings to be 1.  

\subsection{Tiling spaces}\label{TilingSpaces.sec}  Rather than trying to study an individual tiling it often makes sense to study all tilings that have certain properties in common.  The standard way to do this, motivated by physical applications, is to construct a topological space of tilings.
\begin{dfn}
A {\em tiling space} $\Tsp$ is a set of tilings of $\R^d$ that is invariant under the action of translation and closed under the topology given by the tiling metric $d$.
\end{dfn}
\noindent One common way to make a tiling space is by taking the closure of the translational orbit of some fixed tiling $\T$, in which case we write $\Tsp_\T$.   This tiling space is called the {\em hull} of $\T$.    

\begin{thm} Tiling spaces are compact in the metric topology.\end{thm}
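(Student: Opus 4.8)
The plan is to show that an arbitrary tiling space $\Tsp$ is sequentially compact, which suffices since the tiling metric makes $\Tsp$ a metric space. So let $(\T_n)$ be a sequence of tilings in $\Tsp$; I want to extract a subsequence converging to some $\T \in \Tsp$. The strategy is the standard diagonal-argument / compactness-exhaustion approach familiar from the FLC setting, but with the key modification that at each stage one must invoke compactness of the prototile set $\ppp$ (which follows from compactness of $\sss$, compactness of $\lcal$, and continuity of $sp$) rather than finiteness. First I would fix an exhaustion of $\R^d$ by balls $B_1(0) \subset B_2(0) \subset \cdots$. For each radius $R$, consider the patches $\patch_n^R \subset \T_n$ consisting of all tiles of $\T_n$ whose supports meet $\overline{B_R(0)}$. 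Because supports of prototiles lie in the compact set $\sss$, each such support has diameter bounded by some uniform constant, so the supports of $\patch_n^R$ all lie inside a fixed larger ball $B_{R'}(0)$, and — again using that supports contain the origin in their interior and lie in a compact family — one gets a uniform lower bound on the volume of each tile, hence a uniform bound $N(R)$ on the number of tiles in $\patch_n^R$, independent of $n$.

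Next I would argue that, after passing to a subsequence, the patches $\patch_n^R$ converge. Having fixed the cardinality (pass to a subsequence so that $\#\patch_n^R = N$ is constant), each patch is described by $N$ tiles, i.e.\ $N$ points $(S_i - x_i, l_i)$ with $l_i \in \lcal$, $S_i = sp(l_i)$, and control points $x_i$ lying in a fixed compact region. Since $\lcal$ is compact and the control points range over a compact set, by passing to a further subsequence we may assume each label sequence $(l_i^{(n)})_n$ converges in $\lcal$ and each control point sequence $(x_i^{(n)})_n$ converges in $\R^d$; continuity of $sp$ then gives convergence of the supports in the Hausdorff metric, hence convergence of the tiles in the tile metric, hence convergence of $\patch_n^R$ in the patch metric to a limit patch $\patch^R$. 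Doing this for $R = 1, 2, 3, \dots$ and diagonalizing, I obtain one subsequence (still denoted $\T_n$) for which $\patch_n^R \to \patch^R$ for every $R$. The limit patches are nested and consistent ($\patch^R$ agrees with $\patch^{R'}$ on $\overline{B_R(0)}$ for $R' > R$), so their union defines a tiling $\T$ of $\R^d$ built from $\ppp$-tiles.

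Finally I would verify that $\T_n \to \T$ in the tiling metric and that $\T \in \Tsp$. For the convergence: given $\epsilon > 0$, choose $R$ with $1/\epsilon < R$; then for large $n$ the patch $\patch_n^R \subset \T_n$ and the corresponding patch of $\T$ both contain $B_{1/\epsilon}(0)$ in their supports and are within $\epsilon$ in the patch metric, so $d(\T_n, \T) < \epsilon$. (A small technical point to handle carefully: one must choose the cut-off radius slightly larger than $1/\epsilon$ and control tiles straddling the boundary of the ball, exactly as in the FLC case — this is bookkeeping, not a real difficulty.) For membership, $\Tsp$ is by definition closed in the tiling metric, so the limit of the sequence $(\T_n) \subset \Tsp$ lies in $\Tsp$. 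The main obstacle — and the only place the ILC hypothesis genuinely bites — is the extraction of a convergent subsequence of patches: in the FLC case one uses a pigeonhole on finitely many patch types, whereas here one must instead lean on the compactness of $\sss$ and $\lcal$ and the continuity of $sp$ to get first a uniform bound on the number of tiles in a bounded region and then, via sequential compactness of $\lcal$ and of the control-point set, convergence of the finitely many tiles comprising each patch. Everything else is the familiar compact-exhaustion-plus-diagonalization argument.
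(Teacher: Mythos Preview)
Your proposal is correct and follows essentially the same route as the paper's proof: both establish sequential compactness by first showing that the set of $n$-tile patches in a bounded region is compact (parameterizing such patches by $\ppp^n \times \R^{dn}$ and using compactness of $\ppp$, which in turn rests on compactness of $\lcal$ and $\sss$ and continuity of $sp$), and then diagonalizing over an exhaustion by balls to produce a limit tiling. Your write-up is more explicit than the paper's sketch on a few points---the uniform bound on the number of tiles meeting $B_R(0)$, the closedness of $\Tsp$ giving membership of the limit, and the boundary-tile bookkeeping---but these are elaborations of the same argument rather than a different strategy.
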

\begin{proof}
We establish sequential compactness for patch sets and then extend to tilings. The key to seeing this is to show that the set of all patches contained in a bounded region and having a fixed number $n$ of tiles is compact for every $n$.  Such a set of patches is parameterized by a bounded subset of $\ppp^n \times \R^{dn}$, where the elements of $\R^{dn}$ are the locations of the control points and thus lie in a bounded region. 
The individual tiles in any sequence of patches will have convergent subsequences since $\ppp$ is compact and the tiles lie in a bounded region.  We can diagonalize to get a sequence of patches for which all of the individual tiles converge; since each patch in the sequence is connected and the tiles have nonoverlapping boundaries, the limit will have this property as well.  Thus every sequence of $n$-tile patches in a bounded region has a convergent subsequence.  Sequential compactness for $\Tsp$ now follows by finding subsequences of tilings that have convergent sequences of patches covering larger and larger regions around the origin.
\end{proof}

\subsection{The transversal $\transversal$ of a tiling space.}  In definition \ref{prototile.def} we defined the prototile set as being a representative set of tiles located so that the origin lies in their support at a control point.
\begin{dfn}
The {\em transversal} $\transversal$ of a tiling space $\Tsp$ is the set of all tilings in $\Tsp$ with a control point at the origin.  
Put another way, $\transversal$ is the set of all tilings in $\Tsp$ containing a prototile.
 \end{dfn}
Every tiling in $\Tsp$ is the translation of lots of tilings from the transversal.%, and so we can consider the tiling dynamical system $(\Tsp, \R^d)$ to be the suspension of a groupoid action on $\transversal$.   
Moreover, every point in the tiling space has a neighborhood that is homeomorphic to an open set in $\R^d$ crossed with an open subset of the transversal.

Much of the work done on FLC tiling spaces uses the transversal in an essential way.  For instance, the $C^*$-algebra of a tiling space is strongly Morita equivalent to the $C^*$-algebra of its transversal.   This means the $K$-theory of the tiling space can be computed from the transversal.  By the gap-labelling theorem, we then understand the possible energy levels that the tiling space can support when considered as an atomic model.   The transversal also makes possible the definition of a Laplace-Beltrami operator that holds information on key mechanical properties of solids.  This has been studied in the FLC case in, for example, \cite{Julien-Savinien}; there is hope that this analysis can be extended to at least some tilings with infinite local complexity.

Thus it is important to understand the structure of the transversal.    When a tiling space has finite local complexity, the transversal is always totally disconnected and, under the condition of repetitivity, is a Cantor set.   Tilings with infinite local complexity can also have transversals that are Cantor sets, but they can also have more complicated transversals. Lemma 3.2 of \cite{Me.Lorenzo.ILCfusion} states that having a totally disconnected transversal is a topological invariant of tiling spaces.

\begin{lem}\cite{Me.Lorenzo.ILCfusion}
If two tiling spaces are homeomorphic and one has a totally disconnected transversal, then so does the other.
\end{lem}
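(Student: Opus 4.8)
The plan is to recognize ``$\trans$ is totally disconnected'' as a property of the tiling space visibly preserved by homeomorphisms, the right gauge being the covering (Lebesgue) dimension of the space itself. I assume, as is standard, that $\Tsp_1$ and $\Tsp_2$ tile a common $\R^d$; this is what makes the numerology below transport across the homeomorphism. Two ingredients are needed. First, the local product structure recorded just above: every point of a tiling space $\Tsp$ has an open neighborhood $N$ homeomorphic to $U\times V$, where $U$ is open in $\R^d$ and $V=N\cap\trans$ is an open subset of the transversal; in particular the sets $N$ of this form cover $\Tsp$, so the corresponding slices $V$ cover $\trans$. Second, elementary dimension theory for compact (hence separable) metric spaces: such a space is totally disconnected iff it is zero-dimensional; $\dim$ is monotone under subspaces; $\dim(X\times Y)\le\dim X+\dim Y$; $\dim(Y\times I)=\dim Y+1$; and $\dim$ obeys the finite sum theorem. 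Granting these, the lemma reduces to the following dichotomy: for a tiling space $\Tsp$ of $\R^d$ with transversal $\trans$, one has $\dim\Tsp=d$ if $\trans$ is totally disconnected, and $\dim\Tsp\ge d+1$ otherwise. Indeed, once this is known, a homeomorphism $\Tsp_1\cong\Tsp_2$ with $\trans_1$ totally disconnected gives $\dim\Tsp_2=\dim\Tsp_1=d$, which forces $\trans_2$ to be totally disconnected too.

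For the first half of the dichotomy, $\trans$ is a closed subset of the compact space $\Tsp$, hence compact metric, so total disconnectedness makes it --- and each of its open subsets --- zero-dimensional. Thus every product chart $U\times V$ satisfies $d\le\dim(U\times V)\le\dim U+\dim V=d+0=d$, the lower bound because the chart contains a homeomorph of a $d$-ball. Covering $\Tsp$ by finitely many such charts, passing to a finite closed refinement, and applying the sum theorem gives $\dim\Tsp\le d$; together with $\dim\Tsp\ge d$ this yields $\dim\Tsp=d$.

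For the second half, suppose $\trans$ is not totally disconnected, hence not zero-dimensional. If every slice $V_i=N_i\cap\trans$ from a covering family of product charts $N_i\cong U_i\times V_i$ were zero-dimensional, then the sum theorem applied to the cover $\trans=\bigcup_i V_i$ would make $\trans$ zero-dimensional; so some $V_i$ has $\dim V_i\ge1$. The chart $U_i\times V_i$ contains a copy of $I^d\times V_i$, which by iterating $\dim(Y\times I)=\dim Y+1$ has dimension $d+\dim V_i\ge d+1$; since $U_i\times V_i$ is a subspace of $\Tsp$, monotonicity gives $\dim\Tsp\ge d+1$, completing the dichotomy.

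I expect the only genuine obstacle to be the dimension-theoretic identity $\dim(Y\times I)=\dim Y+1$: the inequality $\le$ is the general product bound, but the inequality $\ge$ is special to a cube (equivalently manifold or polyhedron) factor --- products of two compacta can drop dimension in general --- so this step should be cited rather than reproved. The rest is routine: the compact-metric equivalence of total disconnectedness with zero-dimensionality, monotonicity and the sum theorem, and the single geometric point that the charts $U\times V$ really are well defined in the ILC setting, for which one checks that inside a small enough ball the control points of tilings near a given tiling are unique and depend continuously on the tiling. It is also worth recording the common-$\R^d$ hypothesis explicitly, since the ``$d$ versus $d+1$'' gap is exactly the mechanism that transfers total disconnectedness of the transversal from one tiling space to the other.
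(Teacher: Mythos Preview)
The paper does not actually prove this lemma; it merely quotes it from \cite{Me.Lorenzo.ILCfusion} as Lemma~3.2 there, so there is no in-paper argument to compare against. Your approach via covering dimension is correct and is in fact the standard way such results are established (and is, to my knowledge, essentially the argument in the cited reference): one shows that $\dim\Tsp=d$ precisely when $\trans$ is totally disconnected and $\dim\Tsp\ge d+1$ otherwise, then invokes topological invariance of $\dim$.

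Your write-up is careful on the two points that matter. First, you correctly flag that the equality $\dim(Y\times I)=\dim Y+1$ is not a consequence of the general product inequality and requires the theorem that multiplying by a cube raises dimension by exactly one (valid for separable metric spaces). Second, you are right to record the common-$\R^d$ hypothesis explicitly: without it the ``$d$ versus $d{+}1$'' gap can collapse, e.g.\ a one-dimensional tiling space with an interval transversal could have the same covering dimension as a two-dimensional tiling space with a Cantor transversal. In the context of the paper (and of \cite{Me.Lorenzo.ILCfusion}) the ambient dimension is fixed, so this is a harmless standing assumption, but it is good that you made it visible. The remaining steps---zero-dimensionality of totally disconnected compact metric spaces, monotonicity, the sum theorem applied after shrinking the open product charts to a closed cover---are routine and correctly invoked.
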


%In many cases the transversal can be constructed as an inverse limit of approximants $\transversaln$ under the `forgetful map' $f_n$, where $\transversaln$ is the set of all patches around the origin of size $n$ in the transversal.
The transversal of the pinwheel tiling looks like two Cantor sets, each crossed with a circle.  The way to see this is to first imagine a pinwheel tile with the control point at the origin.  The set of all tilings that contain this tile will be a Cantor set since distinct tilings are always separated by some amount determined by the closest place on which they differ, yet each tiling is the limit of a sequence of other tilings.  Now this Cantor set must be rotated in all amounts to get half the tiling space.  The other half of the space is obtained by doing the same thing with the flip of the pinwheel tile we started with.
We describe the nature of the transversal for several examples in Section \ref{example.analysis}.

\section{Ergodic theory applied to ILC tiling systems}\label{sec:ergodic}

Since tilings can be used to model the atomic structure of quasicrystals, the statistical, large-scale approach of ergodic theory makes sense: anything happening on a set of measure zero isn't physically observable and so can be ignored. {\em Miles of Tiles}\cite{Radin.miles} is an exposition of the method that explains the physical motivation for non-physicists.  We begin by interpreting fundamental dynamics concepts to our situation.

\subsection{Tiling dynamical systems, minimality, repetitivity, and expansivity}
Translation provides a natural action of $\R^d$ on $\Tsp$ that is continuous in the tiling metric and allows us to take a dynamical approach.
\begin{dfn}
A {\em tiling dynamical system} $(\Tsp, \R^d)$ is a tiling space $\Tsp$ along with the action of $\R^d$ by translation. 
\end{dfn}

A dynamical system is said to be {\em minimal} if the orbit of every tiling under translation is dense.  A minimal FLC tiling system has the property that all possible patches of any size can be found in any given tiling $\T$.   Since there are many more patches in an ILC system, minimality guarantees that every patch found in any tiling can be arbitrarily well approximated by one from any given tiling $\T$.   It is fairly easy to construct a minimal ILC tiling space by using traditional techniques, for instance with substitution as in Section \ref{sec:hierarchical}.

A tiling $\T$ is said to be {\em repetitive} if for every patch $P$ that appears in $\T$ and every $\epsilon >0$, there is an $R$ for which every ball of radius $R$ in $\T$ contains a patch that is within $\epsilon$ of $P$.    The orbit closure of $\T$ is a minimal tiling system if $\T$ is repetitive.

A tiling dynamical system is said to be {\em expansive} if there is a $\delta >0$ such that
whenever $d(\T-x, \T'-x) < \delta$ for all $x \in \R^d$, it means that $\T = \T' - y$ for some $y \in \R^d$ with $|y|<\delta$.  In an expansive system, then, the only way for the entire orbits of two tilings to be close is if they were small translates of one another to begin with.  FLC tiling spaces, like their cousins the shift spaces, always have expansive dynamical systems.   However, infinite local complexity brings us examples of tiling systems that do not have expansive dynamics.  Such an example appears as our example \ref{Solenoid.extensions}.

\subsection{The Borel topology and cylinder sets}
\label{Borel.topology.trim.sets}
In classical symbolic dynamics it is commonplace to consider the set of all sequences that have a specific symbol or word in a given location, and this set is called a cylinder set.   This notion generalizes nicely to the FLC tiling situation, where we need to specify both a patch $P$ and an open set  $U$, such that the cylinder set $\Tsp_{P,U}$ is the set of all tilings that contain the patch $P$ in a location designated by $U$.    Two properties of cylinder sets are essential to bring into the ILC situation.   First, they generate the metric topology.   Second, they can be used to compute the frequency with which the patch $P$ appears throughout the tiling space.

When we have infinitely many different two-tile patches, the cylinder sets based on single patches do not generate the topology.  Moreover, it is possible that every individual patch has frequency 0.  This means we need to make cylinder sets based on 
{\em sets} of patches, for instance, the set of all patches that are within $\epsilon$ of some particular patch.   To measure frequency accurately we need to define sets of patches that don't contain any `repeats'  up to translation:  
\begin{dfn}
A set of patches $I$ is said to be {\em trim} if, for some fixed open set $U \subset \R^d$ and every $\T \in \Tsp$, there is at most one patch $P \in I$ and point $x \in U$ for which $P-x \subset \T$.   \end{dfn}
\noindent Thus a trim set does not contain patches that are arbitrarily small translates of one another, or patches that sit in arbitrarily small translates of other patches.
%a trim set of patches does not contain two translates of the same patch, or any patch that is contained in the translate of any other.

\begin{dfn} Let $U \subset \R^d$ and let $I$ be a set of patches. The {\em cylinder set} $\Tsp_{I,U}$ is the set of all tilings in $\Tsp$ for which there is some patch $P \in I$ and point $x \in U$ for which $P-x \in \T$.  
\end{dfn}
 If $I$ is a trim set with a small enough $U$, we know that a tiling can only be in the cylinder set via one specific patch $P$ and point $x$.  If we let $\chi_{I,U}$ be the indicator function for this set, then $\chi_{I,U}(\T - x)$ as $x$ ranges through some subset of $\R^d$ will count the number of times a patch from $I$ appears in $\T$ in that subset, without overcounting. 

\begin{prop}
Cylinder sets given by trim sets generate the metric topology on $\Tsp$.
\end{prop}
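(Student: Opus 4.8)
The plan is to exhibit, for every tiling $\T \in \Tsp$ and every $\delta > 0$, a trim cylinder set $N$ that is open in the metric topology, contains $\T$, and satisfies $N \subset B_\delta(\T)$; the family of all such $N$ is then a basis for the metric topology consisting of trim cylinder sets, which is the content of the proposition. (As noted before the statement, single-patch cylinder sets --- though often trim --- do not suffice in the ILC setting, so the sets of patches used here must be genuinely thick.) We may assume $0$ is a control point of $\T$, since translating $\T$ only translates the set $N$ built below, and a translate of a cylinder set given by a trim set is again one. Fix a large radius $R$ and let $P_0$ be the patch of all tiles of $\T$ whose interiors meet $\overline{B_R(0)}$; this is a finite patch, because compactness of $\sss$ forces a uniform positive lower bound on tile size and so only finitely many tiles meet a bounded region, and it has a control point at the origin. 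For small $\epsilon, \sigma > 0$ let $I$ be the set of patches $Q$ that have a control point at the origin and satisfy $d(Q, P_0) < \epsilon$, let $U = B_\sigma(0)$, and put $N = \Tsp_{I,U}$; then $\T \in N$ via $Q = P_0$ and $x = 0$.

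For the inclusion $N \subset B_\delta(\T)$: if $\T' \in N$ then $Q - x \subset \T'$ for some $Q \in I$ and $x \in U$, and since translating a patch by $x$ moves the patch metric by at most $|x|$, we get $d(Q - x, P_0) < \epsilon + \sigma$ and $\supp(Q - x) \supset B_{R - \epsilon - \sigma}(0)$; substituting the patches $P_0 \subset \T$ and $Q - x \subset \T'$ into the definition of the tiling metric gives $d(\T, \T') \le \max(\epsilon + \sigma,\, 1/(R - \epsilon - \sigma))$, which is less than $\delta$ once $R$ is large and $\epsilon, \sigma$ are small. For openness of $N$: suppose $\T_1 \in N$, witnessed by $Q_1 \in I$, $x_1 \in U$ with $Q_1 - x_1 \subset \T_1$. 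The key mechanism is that every tiling $d$-close enough to $\T_1$ contains a patch close to $Q_1 - x_1$ \emph{with the tiles in bijective correspondence and each displaced only slightly} --- this is exactly what the ILC patch metric delivers, once the patches witnessing closeness in the definition of $d$ are taken large enough to contain $Q_1 - x_1$. Given such a nearby $\T'$ and its perturbed patch $\tilde P \subset \T'$, renormalize: let $x'$ be the control point in $\tilde P$ of the tile matched with the origin-control-point tile of $Q_1$; then $x'$ is close to $x_1$ (uniform continuity of the support map bounds control-point displacement in terms of the tile metric), hence $x' \in U$, and $Q' := \tilde P + x'$ has a control point at the origin with $d(Q', Q_1) \le d(\tilde P, Q_1 - x_1) + |x_1 - x'| < \epsilon$. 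Thus $Q' \in I$ and $Q' - x' = \tilde P \subset \T'$, so $\T' \in N$.

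The step I expect to be the real obstacle is verifying that $I$ may be taken trim. A full $\epsilon$-ball of patches is never trim --- it contains arbitrarily small translates of its center --- which is why $I$ is cut down to the transversal slice ``control point at the origin''; but even that slice can fail to be trim, because in the presence of arbitrarily small tiles a single tiling may contain two genuinely different sub-patches, each $\epsilon$-close to $P_0$ and located within $\sigma$ of the origin, giving two distinct admissible pairs $(Q, x)$. Confirming --- or, where necessary, enforcing by intersecting $I$ with a mild geometric condition that limits such small tiles --- that for $\epsilon, \sigma$ small enough the witnessing pair $(Q,x)$ is unique is the delicate technical point. Granting it, the sets $N$ constructed above form a basis of trim cylinder sets for the metric topology, proving the proposition.
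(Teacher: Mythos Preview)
Your approach is essentially the paper's: form $I$ as the transversal slice of the $\epsilon$-ball about $P_0$ (patches with a designated control point pinned to a fixed location) and take a small $U$, so that the resulting cylinder set is a small open neighborhood of $\T$. The paper's proof is much terser --- it simply asserts that such an $I$ is trim and that the cylinder set \emph{equals} the metric ball --- so your added care about openness and the inclusion $N \subset B_\delta(\T)$ is not wasted.

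The trimness worry you flag is legitimate but is dispatched by the standing compactness hypothesis on $\sss$, which you already invoked once: it gives a uniform positive lower bound $r_0$ on tile inradius. For $\epsilon$ well below $r_0$, any $Q$ with $d(Q,P_0)<\epsilon$ has its tiles in a \emph{forced} bijection with those of $P_0$ (each tile of $P_0$ has a unique tile of $Q$ within $\epsilon$, since distinct tiles of $Q$ have centers at least $2r_0$ apart), and in particular the origin-control-point tile of $Q$ is singled out. Now if $Q_1 - x_1$ and $Q_2 - x_2$ are both sub-patches of some $\T'$ with $Q_i \in I$ and $|x_i|<\sigma$, then both are $(\epsilon+\sigma)$-close to $P_0$, so for $\epsilon + \sigma < r_0$ each tile of $P_0$ has a unique tile of $\T'$ within $\epsilon+\sigma$; this forces $Q_1 - x_1 = Q_2 - x_2$ as patches, and then $x_1 = x_2$ since both are the control point of the tile matched with the origin tile of $P_0$. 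So no ``mild geometric condition'' needs to be imposed: trimness holds automatically once $\epsilon,\sigma$ are small relative to $r_0$.
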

\begin{proof}
We establish that every ball of radius $\epsilon$ around a tiling $\T$ can be obtained as a cylinder set.   Take the smallest patch in $\T$ that contains $B_{1/\epsilon}(0)$ and call it $P$, and denote by $x$ the control point of a tile in $P$ containing the origin.   The set of all patches that are within $\epsilon$ of $P$ can be partitioned into a trim set of translation classes $I$: take all patches $P'$ that have a control point at $x$ and for which $d(P, P') < \epsilon$.   
Then $\Tsp_{I, B_{1/\epsilon}(0)}$ is a cylinder set that equals the ball of radius $\epsilon$ around $\T$.
\end{proof}

\subsection{Translation-invariant measures and patch frequency}
\label{General.ILC.measures}

We begin this discussion by reviewing how translation-invariant Borel probability measures can be used to compute frequencies in the FLC case.   Given some finite patch $P$, if $U$ is a sufficiently small open set and $\mu$ is an invariant measure we can define the frequency of $P$ to be $freq_\mu(P) = \frac{\mu(\Tsp_{P,U} ) } {Vol(U)}$.  If $\mu$ is ergodic then by the ergodic theorem  for $\mu$-a.e $\T$ we have
$$freq_\mu(P) = \lim_{R \to \infty} \frac{1}{Vol(B_R(0))Vol(U)} \int_{B_R(0)} \chi_{P,U}(\T - x) dx, $$ where $\chi_{P,U}$ is the indicator function for $\Tsp_{P,U}$.  The integral represents the number of times we see a copy of $P$ in the ball of radius $R$ around the origin in $\T$, so averaging this by the size of the ball gives us the frequency of $P$.

In the ILC case, when $\mu$ is a translation-invariant measure and $I$ is a trim set we still see that $\mu(\Tsp_{I,U})$ is a multiple of $Vol(U)$ for all sufficiently small sets $U$.  Thus we can define the frequency of $I$ to be $freq_\mu(I) = \frac{\mu(\Tsp_{I,U})}{Vol(U)}$ as before.   And as before we are justified in the use of the word ``frequency" by the ergodic theorem.   If $I$ is a trim $\epsilon$-ball around some patch $P$, then $freq(I)$ is the percent of time we see patches that look almost exactly like $P$.

Let $\protoset_n$ be the set of all connected $n$-tile patches that have a control point at the origin and are translates of patches that appear in $\Tsp$. The metric on patches gives us a measurable structure on $\protoset_n$, and since every subset of $\protoset_n$ is trim, $freq_\mu$ forms a measure on $\protoset_n$.   If we want, we can consider $\protoset_\infty = \bigcup \protoset_n$, which is not itself a trim set.   However, since any subset of a trim set is trim, we can consider $freq_\mu$ to be a measure on (the set of measurable subsets of) any trim subset of $\protoset_\infty$.

If $\mu$ is a probability measure, then the frequency measure on $\protoset_n$ is {\em volume-normalized}, meaning that $\int_{\protoset_n} Vol(P) freq_\mu(dP) = 1$.   This follows from the fact that $\mu(\Tsp) = 1$ and  $\Tsp$ can be arbitrarily finely approximated by cylinder sets of the form $\Tsp_{I_n(j_n),U_n}$, where $I_n(j_n) = B_\epsilon(P_n(j_n))$ and $U_n = \supp(P_n(j_n))$ for some representative set of $n$-supertiles.

\subsection{Entropy and complexity} \label{entropy.complexity}

We develop a notion of complexity based on the standard form in symbolic dynamics, but taking ideas from topological pressure theory and the topological entropy of flows. The complexity function distinguishes the sort of infinite local complexity represented by the solenoid (example \ref{Solenoid.extensions})  from that of, say,  tilings which have a higher topological dimension than their ambient dimension (example \ref{DPVexample}, for instance).

There are three interrelated ways to define the complexity function, all of which yield slightly different actual numbers but have the same asymptotics and are based on the idea that complexity should count the number of patches of size $L$ one might see in $\Tsp$.  To that end we define a metric $d_L$ on $\Tsp$ for each $L > 0$ by
$$d_L(\T,\T') = \sup_{x \in [0,L]^d}\{d(\T-x,\T'-x)\}$$
Two tilings will be within $\epsilon$ of one another in this $d_L$ measure if their patches on $[-1/\epsilon, L + 1/\epsilon]^d$ are within $\epsilon$ in the patch metric.  Our complexity functions will count up how many such patches there are.

For any $\epsilon > 0$ and $L > 0$ we define $N_1(\epsilon, L)$ to be the minimum number of balls of $d_L$-radius $\epsilon$ it takes to cover $\Tsp$.   We define $N_2(\epsilon, L)$ to be the minimum number of sets of $d_L$-diameter $\epsilon$ it takes to cover $\Tsp$.  It is clear that since every open cover using balls of $d_L$-radius $\epsilon$ is a cover by sets of $d_L$-diameter $2 \epsilon$, we know that $N_2(2\epsilon,L) \le N_1(\epsilon,L)$.

Our third version of a complexity function relies on the idea of an {\em $\epsilon$-separated set:}   a set of tilings in $\Tsp$, no two of which are within $\epsilon$ of each other in the $d_L$ metric.  We define $N_3(\epsilon, L)$ to be the maximum cardinality of an $\epsilon$-separated set.  If we have such a set then we can cover $\Tsp$ with balls of $d_L$-radius $\epsilon$ centered on its elements, so we have that $N_3(\epsilon,L) \ge N_1(\epsilon,L)$.   Also, since any set of diameter $\epsilon$ can contain at most one element of an $\epsilon$-separated set, we have that $N_2(\epsilon,L) \ge N_3(\epsilon,L)$. Thus we have:
$$N_2(2 \epsilon,L)  \le N_1(\epsilon,L) \le N_3(\epsilon,L) \le N_2(\epsilon,L)$$

If we let $N$ denote any of these complexity functions, we can look at what happens as $\epsilon$ goes to $0$ and/or as $L \to \infty$.  For any given $L$ we see that even for tilings with finite local complexity $\lim_{\epsilon \to 0}N(\epsilon,L) = \infty$.    Instead we should fix an $\epsilon$ and investigate $\lim_{L \to \infty} N(\epsilon,L)$.      We say that $\Tsp$ has {\em bounded complexity} if $N(\epsilon, L)$ is bounded by some function of $\epsilon$, independent of $L$.  We say it has {\em polynomial complexity} if $N(\epsilon,L)$ is bounded by $C(\epsilon)(1+L)^\alpha$, where $C$ is some function of $\epsilon$ and $\alpha$ is some positive constant.

\begin{dfn}
The {\em $\epsilon$-entropy} of the tiling dynamical system $(\Tsp, \R^d)$ is given by 
$$h_\epsilon(\Tsp) = \limsup_{L \to \infty}  (log(N(\epsilon,L)))/ L^d.$$   If $\lim_{\epsilon \to 0} h_\epsilon(\Tsp) = h(\Tsp)$ is finite, then we say the system has finite entropy equal to $h(\Tsp)$.
\end{dfn}

The usual complexity function $c(n)$ for a one-dimensional symbolic sequence on a finite number of letters counts the number of distinct words of length $n$.   If we consider the sequence to be a tiling with labelled unit interval tiles, then any of our complexity functions $N(\epsilon,L)$ are approximately equal to $c([L + 2/\epsilon])/\epsilon$.

%
%There is also a dynamical definition of transversal that Ian knows and that I'm calling an {\em abstract transversal}.  The basic idea is that the transversal acts as a sort of global Poincare section for the dynamics of translation.  It is some closed subset of the space that hits any orbit in a Delone set, basically.   It is clear that the canonical transversal has this type, and I don't know where we go from here.

%
%There is a more nuanced definition of expansivity for our dynamical systems.   We say that $(\Tsp,\R^d)$ is {\em strongly expansive} if the induced action on $\transversal$ is expansive.   In the FLC case, expansivity implies strong expansivity, but there are ILC spaces that are expansive but not strongly expansive.   For instance

\section{Hierarchical tilings: substitution and fusion}
\label{sec:hierarchical}

An important theme in the study of aperiodic order is hierarchical structures: sequences or tilings that can be seen as possessing structure at arbitrarily large length scales.   The earliest work in this direction was on substitution sequences, which %have inspired a few generations of mathematicians and 
are surveyed in \cite{Fogg}.  Self-similar tilings were a natural generalization to the tiling situation, and have also been studied extensively in the FLC case (\cite{Baake-Grimm-book} is an excellent reference).   However, such hierarchical construction methods can lead naturally to tilings with infinite local complexity.  Early examples of tilings with infinite local complexity arose from tilings with a finite number of tile sizes and a substitution algorithm that forced the tiles to slide past one another in infinitely many ways \cite{Danzer, Kenyon.rigid}.   We have selected three examples that show some of the things that can happen when infinite local complexity arises in a hierarchical tiling.  

\subsection{Generating hierarchical tilings I: substitution}\label{substitution}
The earliest form of substitution was for symbolic systems, where there is some discrete alphabet $\aaa$ and some substitution rule $\sigma: \aaa \to \aaa^*$ that takes letters to words.   For instance, the Fibonacci substitution has $\aaa = \{a,b\}$, with $\sigma(a) = ab$ and $\sigma(b) = a$.   One can iterate the substitution by substituting each letter individually and concatenating the results.   In the Fibonacci example we have
$$\sigma^2(a) = \sigma(a)\sigma(b) = ab \, a \qquad \sigma^3(a) = \sigma(a) \sigma(b) \sigma(a) = ab \, a \, ab$$ and so on.   One can generate infinite sequences in this manner.

Extending this to the tiling case in one dimension is simple because tiles are intervals and can be concatenated without discrepancy.   However, once we are in two dimensions the tiles have geometry that can prevent the tiles from fitting together.  The first way around this was to devise inflate-and-subdivide rules that generate self-similar or self-affine tilings via linear expanding maps.   Many beautiful examples have been discovered and investigated, and can be found on 
the Tilings Encyclopedia website \cite{Tiling.encyclopedia}.

An inflate-and-subdivide rule requires a linear expansion map $\phi: \R^d \to \R^d$ such that for each prototile $t \in \ppp$, the expanded set $\phi(\supp(t))$ can be expressed as a union of tiles equivalent to prototiles from $\ppp$.  We write $S(t)$ to represent the patch of tiles that result from the inflate-and-subdivide process, called a $1$-supertile.   We can apply the substitution rule to the tiles in the patch $S(t)$ to obtain the patch $S^2(t)$, which we call a $2$-supertile.   Repeated substitution produces higher-order supertiles that grow to cover $\R^d$ in the limit.    %In the FLC case you should look at \cite{Sol.self.similar}.   Here is an example of a one-dimensional inflate-and-subdivide rule on an infinite prototile set.

\begin{ex}{\em Infinitely many tile lengths.} \label{Variable.size.example}
Let $[1,3]$ be both the label set and the set of tile lengths for a one-dimensional tiling as in our first introductory example.  For $x \in [1,3]$, the tile denoted $t_x$ is taken to be of type $x$ and $\supp(t_x)$ is an interval of length $x$.    The control points are taken to be the left endpoints.   We can take the metric on the label set to be given by  $d_\lcal(x,y) = |x - y|/2$, which is somewhat arbitrary but agrees with the Hausdorff distance of two tiles of lengths $x$ and $y$ that have the same midpoint.

We define a substitution rule that inflates by the expansion map $\phi(x) = 3x/2$ and subdivides the result only if it is larger than 3.  
If $x \in [1,2]$ we define $S(t_x) =  t_{3x/2} $, supported in the interval $\phi(\supp(t_x))$.   If $x \in (2,3]$ we define $S(t_x) = t_{x} \cup t_{x/2}$, again supported in the interval $\phi( \supp(t_x))$.   Notice that the substitution rule is discontinuous:  two tiles with lengths on either side of $2$ substitute to patches that are not close in the patch metric since they have different numbers of tiles.

In figure \ref{vss.its} we show a 9-supertile for the substitution, with the interval lengths coded by color [greyscale]; tiles close in color are close in length.   We see consecutive copies of the same tile on three occasions.
\begin{figure}[ht]
\includegraphics[width=6in]{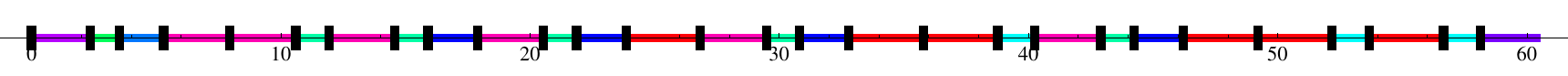}
\caption{Nine iterations of the substitution rule, applied to $\pi/2$.}
\label{vss.its}
\end{figure}

A version of the tiling space generated by this rule, considered from a fusion standpoint, is studied in \cite{Me.Lorenzo.ILCfusion}, where it is shown to be minimal and have a totally disconnected transversal and a unique translation-invariant Borel probability measure that is nonatomic.  We investigate more about this tiling space in Section \ref{vss.analysis}.

\end{ex}

The geometric rigidity imposed by the linear map $\phi$ can be loosened somewhat.  Tiling substitution rules exist such that any tile $t$ is substituted by a patch of tiles $S(t)$, but this patch may not be supported on a set that is a linear expansion of $t$.   These have been called combinatorial substitutions \cite{My.primer}, a special case of which is known by the term ``generalized substitutions" \cite{Gensubref}.

A straightforward way to generate tiling substitutions in $\R^d$ is to begin with the direct product of $d$ one-dimensional substitutions.   Given $d$ substitutions $\sigma_1, \sigma_2, ..., \sigma_d$ on alphabets $\aaa_1, \aaa_2,...\aaa_d$ we can define $$\sigma(a_1, a_2, ..., a_d) = \left(\sigma_1(a_1),  \sigma_2(a_2), ... \sigma_d(a_d)\right)   $$
A tile associated with the label $(a_1, a_2, ..., a_d)$ is a $d$-dimensional rectangle, the length of the $i$th side depending on $a_i \in \aaa_i$.   In a direct product tiling substitution the 
tiles must line up facet-to-facet and thus always have finite local complexity.  %Some work has been done on the tiling model of this \cite{Mozes,Livshits}.

These can be made into the more interesting {\em direct product variation (DPV)} substitutions, one of which is the example below.  To construct such a substitution we rearrange the inside of at least one of the substituted tiles in order to break the direct product structure.   Care must be taken to ensure that the rearranged interior still forms a legal patch when substituted so that the substitution admits tilings.   

Whether there is finite or infinite local complexity depends on combinatorial, number-theoretic, and/or geometric details.   One with ILC, based on the product of $a \to abbb, b \to a$ with itself, is the primary ILC example in \cite{Me.Robbie} and requires four tile sizes.   The simpler example we present here is similar to the one whose cohomology was computed in \cite{Me.Lorenzo}. 

\begin{ex} {\em Direct product variation (DPV).} 
\label{DPVexample}
Let $\sigma_1: a \to abbb, b \to a$ and $\sigma_2: c \to cc$. 
There are two rectangular tile types we call $A = a \times c$ and $B = b \times c$, where we think of $a, b,$ and $c$ as representing both intervals, their lengths, and their labels.    The direct product will then be
$$\begin{array}{|c|} \hline A \\ \hline \end{array}
\to
 \begin{array}{| c | c | c | c |}
\hline
A & B & B & B \\
\hline
A & B & B & B \\
\hline
\end{array}\,\,, \qquad \begin{array}{|c|} \hline B \\ \hline \end{array}
\to
 \begin{array}{| c | }
\hline
A \\
\hline
A  \\
\hline
\end{array}$$

We can vary this direct product as follows and be guaranteed that the substituted tiles will still fit together.
$$\begin{array}{|c|} \hline A \\ \hline \end{array}
\to
 \begin{array}{| c | c | c | c |}
\hline
A & B & B & B \\
\hline
B & B & B & A \\
\hline
\end{array}\,\,, \qquad \begin{array}{|c|} \hline B \\ \hline \end{array}
\to
 \begin{array}{| c | }
\hline
A \\
\hline
A  \\
\hline
\end{array}$$

By varying the widths of $A$ and $B$ we can obtain tilings with either finite or infinite local complexity.   If the widths are irrationally related,  the substitution rule admits tilings with ILC.
%
%\begin{figure}[ht]
%voila
%\caption{The substitution rule for a DPV}
%\label{DPVsubs}
%\end{figure}
Figure \ref{DPVits} shows three iterations of the $A$ tile  using the widths $a = (1 + \sqrt{13})/2, b= 1$, which are the natural widths for the self-affine tiling for this substitution.  Horizontal fault lines are beginning to develop, with mismatches between the tiles above the lines and those below.
Each iteration of the substitution produces new offsets along the fault lines, ultimately resulting in infinite local complexity.    The connection between these fault lines and the projection method will be discussed when we fully analyze this tiling in Section \ref{DPV.analysis}.

\begin{figure}[ht]
\includegraphics[width=4in]{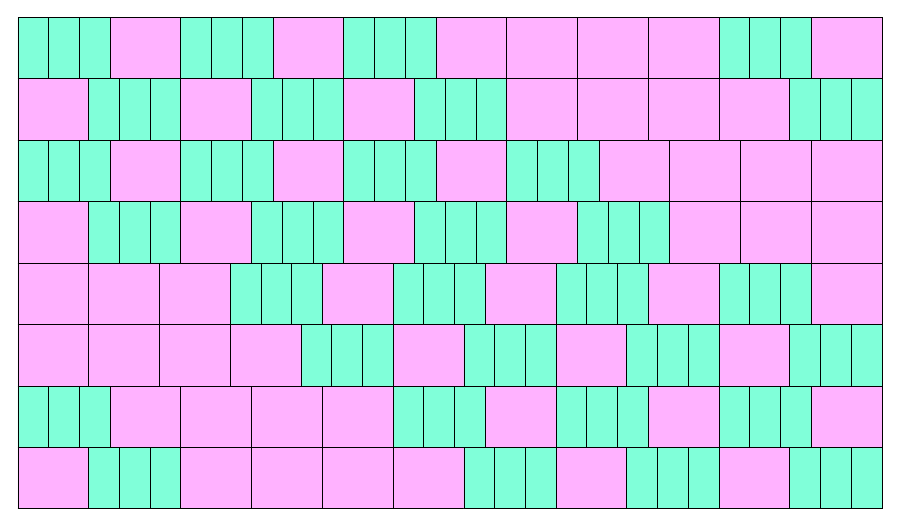}
\caption{A direct product variation $3$-supertile}
\label{DPVits}
\end{figure}

\end{ex}

\subsection{Generating hierarchical structures II: fusion}

Like substitution, fusion constructs tilings by making a series of $n$-supertiles that get larger and larger at each level.   The difference is that while substitution constructs an $n$-supertile by replacing each tile in an $(n-1)$-supertile with a substituted tile, fusion constructs an $n$-supertile by concatenating or `fusing' a number of $(n-1)$-supertiles.   We could think of substitution as being a cellular model:  each tile is a cell that can expand and subdivide itself into new cells.   Fusion is an atomic model:  each tile is an atom that can bond to other atoms to form molecules, which can themselves bond together to form larger structures.  We refer the reader to \cite{Me.Lorenzo.Fusion,Me.Lorenzo.ILCfusion} for technical details and more examples, but we describe many key points here.

The prototile set $\ppp_0$ will serve as our 0-supertiles.  Recall that there is a compact label set which we now call $\lcal_0$ that labels the prototiles and generates a tile and patch metric.   The $1$-supertiles are defined to be a set of finite patches $\ppp_1$ of tiles from $\ppp$.  We require that there be a compact $\lcal_1$ that labels the $1$-supertiles, so that we may write $\ppp_1 = \{P_1(c) \,\, |\,\, c \in \lcal_1\}$.    It is convenient but not necessary to require that if $c_n \to c$ in $\lcal_1$, then $P_1(c_n) \to P_1(c)$ in the patch metric generated by $\lcal_0$.  There are examples where the fusion and/or substitution is only piecewise continuous, for instance example \ref{Variable.size.example}.  

We make our set of $2$-supertiles $\ppp_2$ by requiring that each element of $\ppp_2$ be a {\em fusion} of $1$-supertiles: a finite, connected union of patches that overlap only on their boundaries.  We require that $\ppp_2$ is labelled by some compact label set $\lcal_2$, and we write $\ppp_2 = \{P_2(c)\,\, |\,\, c \in \lcal_2\}$. It is convenient if the patch metric generated by $\lcal_2$ is compatible with the patch metrics generated by $\lcal_1$ and $\lcal$ in the sense of the previous paragraph.

We continue in this fashion, constructing our $n$-supertiles as fusions of $(n-1)$-supertiles and requiring that each $\ppp_n$ be labelled by a compact set $\lcal_n$.    The {\em fusion rule} $\rrr$ is the set of all supertiles from all levels:
$$\rrr = \{\ppp_0, \ppp_1, \ppp_2, ...\}$$
We say a tiling $\T$ is {\em admitted} by $\rrr$ if every patch of tiles in $\T$ is equivalent to one appearing inside of a supertile from $\rrr$. The tiling space $\Tsp_\rrr$ is the set of all tilings admitted by $\rrr$, and $\rrr$ can be thought of as the language of $\Tsp_\rrr$.  In order for a fusion rule to admit an infinite tiling, the sizes of supertiles must be unbounded.  Moreover,  $\Tsp_\rrr$ is a translation-invariant tiling space and can be analyzed dynamically.   

In order to avoid trivialities, we assume that $\Tsp_\rrr$ is not empty and that every element of $\ppp_n$, for each $n$, appears somewhere inside an infinite tiling in $\Tsp_\rrr$.    We can assume the latter without loss of generality since any superflous supertiles can be removed from $\rrr$ without changing $\Tsp_\rrr$.

\begin{dfn}
An {\em infinite-order supertile} $P_\infty$ is a tiling of an unbounded region of $\R^d$ for which there is a sequence of supertiles $P_n \subset \ppp_n$ and translations $x_n \in \R^d$ for which $P_\infty=\lim_{n \to \infty}(P_n - x_n)$ and $P_n - x_n \subset P_{n+1} - x_{n+1}$ for all $n$.
\end{dfn}

Tilings admitted by a FLC fusion rule $\rrr$ are either one or the concatenation of finitely many infinite-order supertiles.  In the ILC situation there is another possibility, that they are the limit of such infinite-order supertiles in the big ball metric.

\begin{ex} {\em Solenoid extensions\footnote{These are closely related to discrete actions known as  ``Toeplitz flows" and are surveyed in \cite{Toeplitz}.}.} 
\label{Solenoid.extensions} We present a simple family of fusions that are not  substitutions.  All of the examples are measurably conjugate to the dyadic solenoid system, which is described as an inverse limit in chapter \cite{Lorenzos.chapter} of this volume and can be seen as a height-1 suspension of the dyadic odometer.   However, the topology  is highly sensitive to changes in the prototile set.

For this family  of tilings, the support of all prototiles is $[0,1]$.  The label set $\lcal$ is a compactification of the non-negative integers $\N_0$ and we write $\lcal = \N_0 \cup \lcal_c$.  We denote a tile of type $l$ as $A_l$.  Regardless of the specific nature of $\lcal$ the fusion rule will be constructed as follows.
%We begin with a set of tiles $A_1, A_2, ...$, where the support of each tile is a unit interval and the label set is countably infinite.  We assume that the label set contains additional labels that compactify it, in which case we have limit tiles that are unit intervals labelled by the limit points.  We can alter the topological properties of the tiling space immensely both by changing the topology of the compactifying set and also by changing which sequences converge to what.  We will consider only the most trivial case here, a one-point compactification.
%For this example, let $A_n = ([-.5,.5], 1/n)$ if $n \in \Z^+$ and let $A_0 = ([-.5,.5],0)$ with the usual Euclidean distance on the label set $\lcal = \{0, 1, 1/2, 1/3, ...\}$.   The prototile set is therefore the set $\{A_0, A_1, ...\}$, unit length tiles with control points at the origin, which is obviously compact with $A_0$ being the only limit tile.
Letting the set of 0-supertiles, $\ppp_0$ be the prototile set, we construct our set of 1-supertiles as follows.   For $l \in \lcal_c$ and $l = 1,2,3,4,...$ we define $P_1(l) = A_l \cup (A_0 + 1)$, that is, the two-tile patch supported on $[0,2]$ given by the concatenation of $A_l$ and $A_0$.   We hope it does not risk too much confusion about the precise support of $P_1(l)$ if we abuse notation and write $P_1(l) = A_l A_0$. 
We write $\ppp_1 = \{P_1(l), l \in \lcal_1\}$, where $\lcal_1$ is the subset of $\lcal$ given by  $\{1,2,3,4,...\}\cup \lcal_c$.    Now to generate the 2-supertiles we concatenate each 1-supertile with label in $\{2,3, 4, ...\} \cup \lcal_c$ with $P_1(1)$, so that we have $P_2(l) = P_1(l)P_1(1) = A_l A_0 A_1 A_0$.   (Again we abuse notation but we know this is supported on $[0,4]$.)   The set of 2-supertiles takes the form $\ppp_2 = \{P_2(l), l \in \lcal_2\}$, where $\lcal_2$ is the subset of $\lcal$ given by  $\{2,3,4,...\}\cup \lcal_c$.    Similarly the set of 3-supertiles will have the form $P_3(l)= P_2(l) P_2(2) = A_lA_0 A_1 A_0 A_2 A_0 A_1 A_0$, for $l \in \lcal_3$.
The general form for the set of $k$-supertiles, $k = 1, 2, 3, ... $ is
$$ \ppp_k = \{ P_{k-1}(l) P_{k-1}(k-1), l \in \lcal_k\}.$$

By looking at the form for $P_3(l)$, we see that there will be an $A_0$ in every other slot, an $A_1$ in every fourth slot, and can surmise that there will be an $A_n$ in every $2^{n+1}$th slot.   In fact we can generate an infinite tiling admitted by this fusion rule by a method quite similar to the construction of a Toeplitz sequence.  We begin by placing infinitely many $A_0$'s on the line with a unit space between them.   Of the remaining spaces, we alternate by filling one with $A_1$ and leaving the next one empty.   We continue in this fashion, filling every other of the remaining spaces with an $A_2$, and so on.   When the process is finished, there may or may not be one empty space.   If there is, it should be filled with an $A_l$ with $l \in \lcal_c$.

Whatever form $\lcal_c$ takes, the tiling space admitted by this fusion rule has infinite local complexity.  When $\lcal_c$ consists of a single point we will see in Section \ref{Solenoid.Complexity} it is actually less `complex' than FLC examples because its complexity is bounded.  When $\lcal_c$ is finite but greater than 1 we can use Theorem \ref{One-D-FLCresult} to show that it is topologically conjugate to a tiling space with finite local complexity.   We will look closely a few special cases in Section \ref{Solenoid.analysis}.

\end{ex}

Tilings generated by substitution can always be seen as being generated by fusion since a tile that has been substituted $n$ times can be seen as the concatenation of tiles that have been substituted $(n-1)$ times.  However the converse is not true.   Fusion is more general and can allow us to vary the size of the supertile sets from one level to the next.   They also can allow us to vary the fusion patterns from level to level, and can account for generalized substitutions and random substitutions.

\subsection{Transition matrices and transition maps}

\subsubsection{Transition matrices}
When there is an inflate-and-subdivide rule or a substitution rule for a tiling with a finite number of tile types it is very handy to compute the {\em transition matrix $\M$} of the substitution.   The $(i,j)$th entry of the matrix is given by the number of tiles of type $i$ in the substitution of the tile of type $j$.    The matrix $\M^n$ knows how many of each prototile type can be found in the $n$-supertiles of each type.  We say $\M$ is {\em primitive} if there is some $n$ for which $\M^n$ has all positive entries.   This means that each $n$-supertile contains copies of every tile type.

 In the planar case, when the inflate-and-subdivide rule is a similarity, we can consider the expansion to be by some complex number $\lambda$, which is the Perron eigenvalue of the transition matrix.
Early in the study of self-similar tiling dynamical systems it was discovered that the algebraic type of $\lambda$ had a significant impact on the dynamics.  Working in the context of finite local complexity,  Solomyak \cite{Sol.self.similar} showed that under the conditions of primitivity and recognizability a self-similar tiling of the line fails to be weakly mixing if and only if $|\lambda|$ is a Pisot number (an algebraic integer, all of whose algebraic conjugates are less that one in modulus).   In the same work he showed that a tiling of the (complex) plane fails to be weakly mixing if and only if $\lambda$ is a complex Pisot number.

It turns out that the algebraic type of the expansion constant also has an effect on local complexity.
In \cite{Me.Robbie} it is shown that under common conditions, if the length expansion is a Pisot number then tilings admitted by the substitution must have finite local complexity.   In this situation it is known that there must be some measurable spectrum and cannot be weakly mixing \cite{Sol.self.similar}.  Thus weak mixing and local complexity are linked via the expansion constant; when it isn't Pisot there is a chance of the local complexity becoming infinite.

If a  fusion rule has a finite number of $n$-supertiles at each stage, we only need to generalize the idea of a transition matrix for substitutions slightly.   We now have transition matrices $\M_{n,N}$ whose (i,j)th entry represents the number of $n$-supertiles of type $i$ in $P_N(j)$.    For any $m$ between $n$ and $N$ we have $\M_{n,N} = \M_{n,m}\M_{m,N}$.   Thus even though we are unable to use Perron-Frobenius theory, we are able to use a parallel type of analysis to determine the possible invariant measures and patch frequencies.    If the tiling space has ILC as in example \ref{DPVexample}, the patches that only appear along infinite fault lines have frequency 0 and all other patches have nonzero frequency \cite{Me.Lorenzo.ILCfusion}.

\subsubsection{Transition maps}

When we have a substitution or fusion on infinitely many prototiles or supertiles, the transition between levels must be a map
$\M_{n,N}: \ppp_n \times \ppp_N \to \Z$ for which $\M_{n,N}(P,Q)$ represents the number of $n$-supertiles of type $P$ in the $N$-supertile of type $Q$.    For fixed $Q$, there are only finitely many nonzero entries, but a fixed $P \in \ppp_n$ might appear in infinitely many tiles.   For instance, every tile type in the solenoid example has this property except ones with label in the compactification $\lcal_c$.

Even though the transition maps are no longer matrices, they can still be used to obtain the possible invariant measures for a large class of ILC fusions (see Section \ref{results.section}).  The overarching principle is that we have a measure $\rho_n$ on each supertile set $\ppp_n$ so that for any measurable, trim subset $I \subset \ppp_n$, $\rho_n(I)$ represents the frequency of seeing any supertile from $I$ in a tiling.   When the sequence of such frequencies $\{\rho_n\}$ behave nicely with respect to transition we obtain both a translation-invariant measure on the tiling space and a handy formula for computing the frequencies of all types of patches, not just supertiles.

\subsection{Recognizable, van Hove, and primitive fusion rules}
The definition of a fusion rule is sufficiently general as to encompass all tilings whatsoever, and therefore we need to put some restrictions on the rules to make them meaningful.   The three standard assumptions are that the fusion rule be recognizable, van Hove, and primitive.   Recognizability and the van Hove property are defined for fusions the same way whether the local complexity is finite or infinite, but primitivity requires a more subtle definition in the case of infinite local complexity.

Recognizability means that the substitution or fusion can be undone in a well-defined way.   For self-similar tilings, recognizability means that there is some finite `recognizability radius' $R$ such that if $\T$ and $\T'$ have identical patches in the ball $B_R(x)$, then they have the same substituted tile at $x$, situated in precisely the same way.   For fusion rules, we need a recognizability radius for each level of supertiles.    That is, for each $n \ge 1$ there is an $R_n > 0$ such that if $\T$ and $\T'$ have the same patch of $(n-1)$-supertiles  in $B_{R_n}(x)$, then they have the same $n$-supertile at $x$ in precisely the same location and orientation.  

If $\Tsp$ is either a substitution or fusion tiling space, we can define spaces $\Tsp^n$ to be the space of tilings from $\Tsp$ with the  $n$-supertiles considered to be the set of prototiles by `forgetting' all the tiles in their interiors.    Since every tiling in $\Tsp$ is a union of $n$ supertiles for any $n$, this is a well-defined tiling space.   There is always a map from $\Tsp^n$ to $\Tsp^{n-1}$ since we know how each $n$-supertile is constructed from $(n-1)$-supertiles.  However, this map is not necessarily invertible:  there could be tilings in $\Tsp^{n-1}$ that could be composed into tilings of $n$-supertiles in more than one way.  The substitution or fusion rule is recognizable if that is not the case and the map is a homeomorphism for each $n$.   %It is shown in \cite{Me.Lorenzo.Fusion} that every fusion rule is a factor of a recognizable fusion rule.

It is convenient to work with fusion rules for which all of the supertiles grow in area in a reasonable way, for instance without becoming arbitrarily long and skinny.   One way to avoid this is to require that the boundaries of supertiles are small relative to their interiors.   To this end, for $r > 0$ and any set $U \in \R^d$ we define $(\partial(U))^{+r}$ to be the set of all points in $\R^d$ that are within $r$ of the boundary of $U$. 
A sequence of sets $\{U_n\}$ in $\R^d$ is called {\em van Hove} if for every $r \ge 0$  we have that $\lim_{n \to \infty} \frac{Vol(\partial(U_n)^{+r})}{Vol(U_n)} = 0$.
A fusion rule is van Hove if any sequence of $n$-supertiles $\{P_n\}$, where $P_n \in \ppp_n$, is supported on a van Hove sequence.   This property is sufficient to ensure that the tiling space is not empty, but it is not necessary.    

The general idea behind primitivity is that given any $n$, there should be some $N$ for which every $N$-supertile contains $n$-supertiles of each type.    This definition makes sense when the sets of supertiles are finite.    When they are not, we simply require that for any $n$ and any open set $I$ of $n$-supertiles, there is an $N$ such that every $N$-supertile contains an $n$-supertile from $I$.  Thus, in the ILC case is that the size of $I$ affects the size of $N$, whereas in the FLC case a single $N$ can be chosen for all $n$-supertiles.     A primitive fusion or substitution has the property that for sufficiently large $N$, $\M_{n,N}(I,Q) \neq 0$ for all $Q \in \ppp_N$.

\section{Results about ILC tilings}
\label{results.section}

\subsection{Fault lines and fractured tiling spaces}
\label{Fault.and.fractured}

We have seen that it is easy to construct tilings of the plane with infinite local complexity.  An important question to ask is, what are the ways a planar tiling can have ILC?  When the prototile set is finite, the answer is given in \cite{Kenyon.rigid}. Up to translation it can only happen if there are  arbitrarily long line segments composed of tile edges where the tiles meet up in arbitrarily many different ways.   

\begin{thm}\cite{Kenyon.rigid}  \label{Kenyon.thm} A tiling of the plane with translated copies of a finite set of tiles either has only a finite number of local configurations or else contains arbitrarily long line segments in the boundaries of the tiles.
\end{thm}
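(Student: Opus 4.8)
The plan is to establish the nontrivial implication — if $\T$ does not have finite local complexity then the union $\bigcup_{t\in\T}\partial(\supp(t))$ of tile boundaries contains straight segments of every length — by first isolating a single ``flexible'' pair of prototile types, then extracting a straight boundary segment from the flexibility, and finally propagating that segment to arbitrarily large scales.

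First I would reduce to one pair of types. Since $\ppp$ is finite there are only finitely many unordered pairs of prototile types, so if $\T$ has infinitely many two-tile patches up to translation then by the pigeonhole principle some pair $(p,q)$ occurs as an adjacent pair — two tiles whose supports intersect, necessarily only in boundary points — in infinitely many distinct relative positions. Thus $\T$ contains a translate $A$ of $p$ together with translates $B_n$ of $q$, say $B_n = B_0 - v_n$ with the $v_n\in\R^2$ pairwise distinct, such that $\supp(A)\cap\supp(B_n)\neq\emptyset$ while the interiors of $\supp(A)$ and $\supp(B_n)$ are disjoint. Because the supports have uniformly bounded diameter the relevant set of $v_n$ is bounded, so after passing to a subsequence $v_n\to v_\infty$; putting $u_n = v_n - v_\infty$, discarding repeats so that $u_n\neq 0$, and passing to a further subsequence, we may assume $u_n/|u_n|\to e$ for some unit vector $e$.

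The geometric heart is to show that $\partial(\supp(p))$ (and likewise $\partial(\supp(q))$) must then contain a straight segment parallel to $e$ of length bounded below by a positive constant depending only on $\ppp$. The mechanism is a lemma of the following type: if a topological disk $B$ touches a topological disk $A$ with disjoint interiors, and a small nonzero translate $B+u$ also touches $A$ with disjoint interiors, then along the contact region $\partial A$ must coincide both with an arc of $\partial B$ and with the $u$-translate of that arc, which forces that arc of $\partial A$ to be invariant under translation by $u$, hence to be a straight segment in the direction of $u$. Applying this to the configurations at $v_n$ and $v_m$ (taking $u = v_n - v_m$) and letting $n,m\to\infty$ along the chosen subsequence yields a segment of $\partial(\supp(p))$ parallel to $e$; finiteness of $\ppp$ gives the uniform lower bound on its length, and one checks that the contact cannot degenerate to a single point — externally tangent ``round'' contacts and convex-corner contacts admit no such sequence of touching translates — so the segment is genuinely non-degenerate.

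It remains to upgrade ``a segment of length at least $\ell_0$'' to ``segments of every length,'' and here one argues that the fault propagates. Somewhere in $\T$ there is now a straight segment $\sigma$ in direction $e$, of length at least $\ell_0$, separating a copy of $p$ from a copy of $q$; and since $\T$ realizes infinitely many of the offsets $v_n$, this copy of $q$ can be slid along $e$ relative to the copy of $p$ by infinitely many amounts accumulating at $0$, each such offset actually occurring in $\T$. The tiles immediately past the end of $\sigma$ in the direction $e$ must accommodate all of these offsets, and because they are drawn from the finite set $\ppp$ this is possible only if they too present straight edges parallel to $e$ along which the same flexibility survives, extending the straight portion of $\bigcup_{t\in\T}\partial(\supp(t))$ by a definite amount; iterating produces straight segments parallel to $e$ of arbitrarily large length. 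The main obstacle is this last step together with the non-degeneracy claims in the previous one: the tile boundaries are only Jordan curves and may be very irregular, so ruling out point-contacts, obtaining a segment with a uniform length bound, and showing the flexibility genuinely propagates to larger scales all require a two-dimensional argument that exploits that the tiles fill the plane with no gaps — not merely that two of them are adjacent — and the careful execution is carried out in \cite{Kenyon.rigid}.
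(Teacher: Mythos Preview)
The paper does not give its own proof of this theorem: it is stated with attribution to \cite{Kenyon.rigid} and immediately followed by further discussion of Kenyon's results, with no argument supplied. So there is no in-paper proof to compare your proposal against.

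That said, your sketch is a faithful outline of the strategy in \cite{Kenyon.rigid}: pigeonhole on the finitely many ordered pairs of prototile types to find one pair realized in infinitely many relative positions, extract a limiting direction $e$, use the two-touching-translates mechanism to force a straight segment on the common boundary parallel to $e$, and then propagate along the fault to obtain arbitrarily long segments. You are also right about where the real work lies. The step from ``$B$ and $B+u$ both touch $A$ with disjoint interiors'' to ``$\partial A$ contains a segment in direction $u$'' is not automatic for arbitrary Jordan-curve boundaries; one has to rule out point contacts and show the contact arc has a definite length, and Kenyon handles this with a careful local argument. Likewise the propagation step --- that the next tile past the end of the segment must itself present a straight edge in direction $e$ --- genuinely uses that the tiles cover the plane and come from a finite set, and is again where \cite{Kenyon.rigid} does the detailed work you defer to it. As a proof \emph{proposal} pointing to the cited source for the delicate parts, this is appropriate; as a self-contained proof it would need those two lemmas filled in.
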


Kenyon also shows in \cite{Kenyon.rigid} that if we allow infinitely many rotations of our finite prototile set  there is only one additional way that infinite local complexity can arise: Some of the tile edges would have to have circular arcs, so that a circular patch of tiles can be constructed.  This patch of tiles could then be rotated by arbitrary amounts inside any patch of tiles that surrounds it to create infinitely many different patches.

Kenyon called an infinite line of tile boundaries along which tiles can slide an {\em earthquake} and elsewhere in the literature it is often called a {\em fault line}.  Unfortunately fault lines do not have a unified definition, so we provide one here.
\begin{dfn}
A tiling $\T$ in a tiling space $\Tsp$ is said to have a {\em fault line} $\ell$ if there are infinitely many nonequivalent tilings $\T'$ such that $\T' = \T$ on one side of $\ell$ and $\T' = \T - x$ on the other side of $\ell$ for some $x \in \R^2$ that is parallel to $\ell$.
\end{dfn}
 However, fault lines are not topologically invariant:  one can take a tiling space containing fault lines and relabel all the tiles by `collaring' (see \cite{Lorenzos.chapter}): each tile is labelled by its corona.  The resulting tiling space will not contain fault lines per se, but they will still be {\em fractured}.    We offer here a new definition that is not specific to planar tilings and may be related to the proximal and asymptotic structure of tiling spaces (see \cite{Barge-Olimb}).
\begin{dfn}
A space $\Tsp$ of tilings of $\R^d$ has a {\em fracture in the direction of $x \in \R^d$} if there exists some $y \in \R^d$ and two tilings $\T, \T' \in \Tsp$ such that
$$\lim_{t \to \infty} d(\T -ty,\T' - ty) = 0 \qquad \text{ and } \qquad \lim_{t \to -\infty} d(\T -ty,\T' -x - ty) = 0$$
\end{dfn}

So a tiling space is fractured in the direction $x$ if there are tilings that asymptotically agree in the $y$ direction and, after an offset, asymptotically agree in the $-y$ direction.   There can be a large region `in the $x$ direction' in the middle of the tiling on which they do not agree.   In one dimension, a tile near the origin could be added, removed or resized.   In two dimensions, tilings with fault lines are fractured, and so are any tilings that are MLD to them.  Moreover tilings that are asymptotically proximal will also have fractured tiling spaces.   As an example, consider a chair tiling with an infinite diagonal of chairs that can be completely flipped without altering the rest of the tiling. 

Notice that $y$ is not uniquely defined.   It obviously can be rescaled, but in two or higher dimensions the direction can be changed.  The direction of $x$, however, cannot be changed without changing the direction of the fracture.   A tiling space can have multiple fractures in different directions:  see \cite{My.primer} for a planar tiling with translationally-finite prototile set that has fault lines in three independent directions.

Fractures and fault lines play an important role in the spectrum of tiling dynamical systems. Recall the following definition from higher-dimensional dynamics.
\begin{dfn}
The dynamical system $(\Tsp, \R^d)$ has an eigenfunction $f: \Tsp \to \C$ with eigenvalue $\alpha \in \R^d$ if for any $\T \in \Tsp$ and $y \in \R^d$, $$ f(\T - y) = \exp(2\pi i \alpha \cdot y) f(\T)$$
\end{dfn}
%Our discussion centers on continuous eigenfunctions rather than measurable ones, although in many cases of note (for instance self-similar tilings and substitutive sequences) the categories coincide.

\begin{thm}\footnote{This result is joint with L. Sadun.} \label{Spectrum.theorem}If $f \in C[\Tsp, \C]$ is a continuous eigenfunction with eigenvalue $\alpha \in \R^d$ and $\Tsp$ has a fracture in the direction of $x$, then $\alpha \cdot x$ is an integer.
\end{thm}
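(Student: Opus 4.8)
The plan is to exploit the defining functional equation of the eigenfunction together with the two asymptotic-agreement conditions that witness the fracture. Let $f \in C[\Tsp,\C]$ be the continuous eigenfunction with eigenvalue $\alpha$, and let $\T, \T' \in \Tsp$ and $y \in \R^d$ be the tilings and direction realizing the fracture in the direction of $x$, so that $d(\T - ty, \T' - ty) \to 0$ as $t \to +\infty$ and $d(\T - ty, \T' - x - ty) \to 0$ as $t \to -\infty$. First I would use continuity of $f$ on the compact space $\Tsp$: applying $f$ to the first limit gives $f(\T - ty) - f(\T' - ty) \to 0$ as $t \to +\infty$, and applying it to the second gives $f(\T - ty) - f(\T' - x - ty) \to 0$ as $t \to -\infty$.

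Next I would substitute the eigenfunction relation $f(\sss - w) = \exp(2\pi i \alpha \cdot w) f(\sss)$ into both statements. The first becomes $\exp(2\pi i\, t\, \alpha\cdot y)\bigl(f(\T) - f(\T')\bigr) \to 0$, and since the modulus of the exponential factor is $1$, this forces $f(\T) = f(\T')$. The second, writing $\T' - x - ty = (\T' - x) - ty$, becomes $\exp(2\pi i\, t\, \alpha\cdot y)\bigl(f(\T) - f(\T' - x)\bigr) \to 0$, which forces $f(\T) = f(\T' - x)$. Combining, $f(\T') = f(\T' - x)$. But the eigenfunction relation also says $f(\T' - x) = \exp(2\pi i \alpha\cdot x) f(\T')$, so $\exp(2\pi i \alpha\cdot x) f(\T') = f(\T')$.

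To conclude that $\exp(2\pi i \alpha\cdot x) = 1$, and hence that $\alpha\cdot x \in \Z$, I need $f(\T') \neq 0$. Here I would invoke that a continuous eigenfunction is nowhere zero: $|f|$ is a continuous function on $\Tsp$ satisfying $|f(\sss - w)| = |f(\sss)|$, so it is constant on each translation orbit; if $f$ vanished anywhere it would vanish on a dense orbit (using minimality, or at least on the orbit closure) and hence identically by continuity, contradicting that $f$ is a genuine eigenfunction (which is nonzero by convention). In fact one does not even need minimality: $|f|$ is continuous and translation-invariant in modulus, so if $f(\T') = 0$ then $f \equiv 0$ on $\overline{\{\T' - w : w \in \R^d\}}$, and in any case the standard convention is that eigenfunctions are not the zero function, so some value $f(\sss_0) \neq 0$; replacing $\T'$ by a suitable translate of $\sss_0$ is not immediate, so the cleanest route is the $|f|=\text{const}$ argument. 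This nonvanishing step is the only real subtlety; everything else is a direct substitution.

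I expect the main obstacle to be precisely this last point — justifying $f(\T') \neq 0$ cleanly without circular appeals. The safe resolution is: $|f|$ is continuous on the compact space $\Tsp$ and invariant under the $\R^d$-action (since $|{\exp(2\pi i\alpha\cdot y)}| = 1$), hence constant on orbit closures; as $f$ is by definition a nonzero function there is a point where $|f| > 0$, and if the system is minimal this constant is positive everywhere. Since eigenfunctions are only interesting in the minimal (or ergodic) setting — and the surrounding discussion is about minimal ILC systems — we may assume minimality, so $|f|$ is a positive constant and in particular $f(\T') \neq 0$. Then $\exp(2\pi i \alpha\cdot x) = 1$, i.e. $\alpha\cdot x \in \Z$, as claimed. $\qed$
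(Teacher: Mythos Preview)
Your proof is correct and follows essentially the same route as the paper: apply continuity to the two asymptotic limits, factor out the unimodular exponential via the eigenfunction relation to get $f(\T)=f(\T')$ and $f(\T)=f(\T'-x)$, and combine to obtain $\exp(2\pi i\,\alpha\cdot x)=1$. The paper's proof is terser and simply asserts the final equality without pausing over the nonvanishing of $f(\T')$; your additional discussion of that point (via $|f|$ being translation-invariant and hence constant under minimality) is a welcome bit of care that the paper omits.
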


\begin{proof} Suppose that $y \in \R^d$ satisfies the fracture definition for tilings $\T, \T' \in \Tsp$. Since $f$ is uniformly continuous,
$0 = \lim_{t \to \infty} (f(T-ty) - f(T'-ty))
= \lim \exp(2\pi it \alpha \cdot y)[f(T)-f(T')]$,
so $f(T) = f(T')$. Similarly, taking a limit as $t \to -\infty$, we get $f(T)=f(T'-x)$. But then $f(T')=f(T'-x) = \exp(2 \pi i\alpha \cdot x) f(T')$, so
$\exp(2 \pi i \alpha \cdot x)=1$ and the result is proved.
\end{proof}

\begin{cor} \label{Spectrum.corollary} If the tiling space contains fractures in the direction of $kx$ for multiple values of $k \in \R$, then  $k \alpha \cdot x \in \Z$ for each of them. If any of the $k$'s are irrationally related, or if they can be arbitrarily small,  then $\alpha \cdot x = 0$. If the $k$'s have a greatest common factor $m$, then $ \alpha \cdot x \in  \frac{1}{m}\Z$.
\end{cor}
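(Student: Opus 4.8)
The plan is to derive everything from repeated applications of Theorem \ref{Spectrum.theorem} to the single real number $c := \alpha \cdot x$. If $\Tsp$ has a fracture in the direction of $kx$, then applying Theorem \ref{Spectrum.theorem} with $kx$ in the role of $x$ gives $\alpha \cdot (kx) \in \Z$, i.e.\ $kc \in \Z$; doing this for every $k$ in the given collection yields the first assertion at once.

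For the two degeneracy conditions I would argue by contradiction. Suppose $c \neq 0$. If $k_1$ and $k_2$ from the collection are irrationally related, then $k_1 c$ and $k_2 c$ are both nonzero integers, so their ratio $k_1 / k_2$ is rational --- a contradiction; hence $c = 0$. If instead the collection contains $k$'s of arbitrarily small absolute value, pick one with $0 < |k| < 1/|c|$; then $kc$ is a nonzero integer of absolute value less than $1$, which is impossible, so again $c = 0$.

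For the greatest-common-factor statement I would use commensurability together with B\'ezout's identity. If the $k$'s are commensurable with greatest common factor $m$, then finitely many of them, say $k_1, \dots, k_r$, already realize this gcd; writing $k_i = m q_i$ with $q_i \in \Z$ and $\gcd(q_1, \dots, q_r) = 1$, there are integers $a_1, \dots, a_r$ with $\sum_{i} a_i q_i = 1$, hence $\sum_{i} a_i k_i = m$ and $mc = \sum_{i} a_i (k_i c) \in \Z$. Therefore $\alpha \cdot x = c \in \frac{1}{m}\Z$.

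The only step needing any care --- the ``hard part,'' such as it is --- is making precise what ``greatest common factor $m$'' should mean for an arbitrary family of real scale factors and checking that $m$ lies in their $\Z$-span; this is exactly the commensurability hypothesis plus B\'ezout, and it is what converts the family of pointwise conditions $\{k_i c \in \Z\}$ into the single divisibility condition $mc \in \Z$. Everything else is routine, resting only on Theorem \ref{Spectrum.theorem} and the fact that $0$ is the only integer of absolute value strictly less than $1$.
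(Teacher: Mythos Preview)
Your argument is correct; the paper states this corollary without proof, treating it as an immediate consequence of Theorem~\ref{Spectrum.theorem}, and what you have written is precisely the natural elaboration of that implication. Your care in interpreting ``greatest common factor'' for a family of real scalars via commensurability and B\'ezout is appropriate, since the paper leaves that phrase informal.
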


We will use this corollary to compute the topological spectrum of certain direct product variation substitutions in Section \ref{DPV.analysis}.

\subsection{General results for ILC fusion tilings}
If a fusion or substitution rule is primitive, then the tiling or sequence system associated with it is necessarily minimal regardless of local complexity \cite{Me.Lorenzo.Fusion,Me.Lorenzo.ILCfusion}.
It is possible for a fusion tiling space or substitution sequence space to be minimal even if it is not primitive, the Chacon substitution and its DPV analogues being examples.   If a fusion is both recognizable and van Hove, primitivity and minimality are equivalent:

\begin{thm}\cite{Me.Lorenzo.ILCfusion} \label{fusion.minimal.primitive} If the fusion rule $\rrr$ is primitive, then the fusion tiling dynamical system $(\Tsp_\rrr, \R^d)$ 
is minimal.   Conversely, a recognizable, van Hove fusion rule that is not primitive cannot have a minimal dynamical system.
\end{thm}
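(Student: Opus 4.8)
The plan is to prove the two implications separately; the forward one uses only primitivity, while the converse is precisely where recognizability and the van Hove property are needed.

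For the forward implication, unwind minimality to the statement that for all $\T,\T'\in\Tsp_\rrr$ and all $\epsilon>0$ some translate $\T-z$ agrees with $\T'$ to within $\epsilon$ on $B_{1/\epsilon}(0)$. Take $P$ to be the smallest patch of $\T'$ whose support contains $B_{1/\epsilon}(0)$; since $\T'$ is admitted by $\rrr$, $P$ is a translate of a sub-patch of some $n$-supertile $Q\in\ppp_n$. Applying primitivity to the open set $I=B_\delta(Q)\subset\ppp_n$ produces an $N$ for which every $N$-supertile contains, inside its hierarchical decomposition, an $n$-supertile belonging to $I$; choosing $\delta$ small and using that passing to a fixed sub-patch is continuous in the patch metric, every $N$-supertile therefore contains a sub-patch within $\epsilon$ of a translate of $P$. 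Next I would locate an actual $N$-supertile inside $\T$: since $\T$ is admitted, the smallest patch of $\T$ containing $B_R(0)$ is a translate of a sub-patch of a supertile of some level $m=m(R)$, and because the supertiles of any fixed level are finite patches of uniformly bounded support while these patches of $\T$ are not, $m(R)\to\infty$. Fixing $R$ large enough that $m(R)\ge N$ and that $R$ exceeds the diameters of all $N$-supertiles, the translate of $B_R(0)$ that lies inside $\supp Q_{m(R)}$ is a solid ball, and such a ball necessarily contains the support of at least one $N$-supertile occurring in the hierarchical decomposition of $Q_{m(R)}$; that $N$-supertile is then a genuine sub-patch of $\T$. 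Hence $\T$ contains a sub-patch within $\epsilon$ of $P$, and translating $\T$ so that this sub-patch occupies the location of $P$ in $\T'$ completes the argument.

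For the converse I would exhibit a proper nonempty closed invariant subset of $\Tsp_\rrr$. Failure of primitivity supplies a level $n$ and a nonempty open set $I\subset\ppp_n$ such that for every $N$ there is an $N$-supertile $Q_N$ with $\M_{n,N}(I,Q_N)=0$; call these supertiles deficient. Because any sequence of supertiles taking one term from each $\ppp_N$ is van Hove, the supports of high-level supertiles contain balls of unboundedly growing radius, so I may position each $Q_N$ by a translation with the origin at distance at least $\rho_N$ from $\partial(\supp Q_N)$, $\rho_N\to\infty$, and extend this positioned copy to a tiling $\T_N\in\Tsp_\rrr$ (possible since every supertile occurs in the canonical decomposition of some admitted tiling). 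By compactness of $\Tsp_\rrr$ a subsequence converges to some $\T_0\in\Tsp_\rrr$, and on every ball $B_R(0)$ the tiling $\T_0$ coincides with a sub-patch of one of the $Q_N$ lying far inside $Q_N$. Recognizability equips every tiling of $\Tsp_\rrr$ with a canonical decomposition into $n$-supertiles, via a homeomorphism onto the space $\Tsp_\rrr^n$, and this decomposition is determined up to the recognizability radii by a bounded neighborhood of each point; since $\T_0$ agrees with the deficient supertile $Q_N$ well inside $Q_N$ on every large ball, the canonical decomposition of $\T_0$ agrees near the origin with the hierarchical decomposition of $Q_N$, which contains no $n$-supertile from $I$, and this holds at every location, so the canonical decomposition of $\T_0$ contains no $n$-supertile from $I$ anywhere. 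Let $W$ be the set of tilings in $\Tsp_\rrr$ whose canonical $n$-supertile decomposition contains at least one $n$-supertile from $I$: $W$ is translation-invariant, it is open because the decomposition map is a homeomorphism and $I$ is open, and it is nonempty because each $n$-supertile type occurs in the decomposition of some admitted tiling. Then $\Tsp_\rrr\setminus W$ is a proper nonempty closed invariant set containing $\T_0$, so $(\Tsp_\rrr,\R^d)$ is not minimal.

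I expect the main obstacle to be the step in the converse that converts the combinatorial failure of primitivity --- a statement about the transition maps $\M_{n,N}$, hence about hierarchical decompositions --- into the dynamical statement that a certain kind of $n$-supertile is absent from the canonical decomposition of $\T_0$. Recognizability is exactly what makes the canonical decomposition available and locally determined, so that ``$\T_0$ agrees with $Q_N$ near the origin'' can be upgraded to ``the decomposition of $\T_0$ agrees with the decomposition of $Q_N$ near the origin''; without recognizability the set $W$ is not even well defined, and a non-primitive rule can genuinely be minimal, so the hypothesis cannot be dropped. The van Hove property is what keeps the region on which $\T_0$ agrees with $Q_N$ clear of $\partial Q_N$ by more than the recognizability radii, so that this upgrade is legitimate. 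Two smaller points will need routine but genuine care: that restriction to a fixed sub-patch is continuous in the patch metric, and that the supertiles of any fixed level have uniformly bounded support (used in the forward direction to force $m(R)\to\infty$).
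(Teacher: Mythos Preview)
The paper does not actually prove this theorem here; it is stated with a citation to \cite{Me.Lorenzo.ILCfusion} and no argument is supplied, so there is no in-paper proof to compare against.

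That said, your proposal is sound and is essentially the standard argument one expects. In the forward direction you correctly use primitivity to place a near-copy of any patch $P$ inside every $N$-supertile, and then locate an $N$-supertile inside an arbitrary $\T\in\Tsp_\rrr$ via the admissibility of large central patches; the uniform bound on diameters of $n$-supertiles that you flag follows from compactness of $\lcal_n$ together with continuity of the support map, and the continuity of restricting to a fixed sub-patch is routine once $\delta$ is chosen small. In the converse you build a limit $\T_0$ of tilings containing increasingly deficient supertiles and use recognizability to transfer the combinatorial deficiency (no $n$-supertile from $I$ in the hierarchical decomposition of $Q_N$) to the canonical $n$-decomposition of $\T_0$; the van Hove hypothesis is used exactly as you say, to keep the region of agreement clear of $\partial Q_N$ by more than the recognizability radius $R_n$. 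The set $W$ you define is open because the map $\Tsp_\rrr\to\Tsp_\rrr^n$ is a homeomorphism and $I$ is open, and it is nonempty by the standing assumption that every supertile occurs in some admitted tiling; hence $\Tsp_\rrr\setminus W$ is the desired proper closed invariant subset. No gaps.
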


%It is interesting to consider that this theorem implies that the variable size substitution example from \ref{Variable.size.example}
%must therefore have a minimal dynamical system.   
 
 In Section \ref{Borel.topology.trim.sets} we defined a trim set of patches (basically, one that contains no repeats up to translation), and we explained in Section \ref{General.ILC.measures}  why it is appropriate to consider $freq_\mu(P) = \frac{\mu(\Tsp_{I,U})}{Vol(U)}$ to be the frequency of occurrence of patches from $I$ throughout $\Tsp$ (basically, the ergodic theorem).   For fusion tilings, we can say more about frequencies of patches by understanding frequencies of supertiles.    While our discussion is written in the context of ILC fusions, we note that the construction can be simplified to apply to the FLC case.
 
 Throughout this discussion it is essential that our fusion rule be recognizable and van Hove, and we refer the reader to \cite{Me.Lorenzo.ILCfusion} for proofs and more details.    Consider a set $I \subset \ppp_n$ of $n$-supertiles.  Supposing that we have chosen the control points of our $n$-supertiles so that each supertile contains some $\epsilon$-ball around the origin,  $I$ is automatically a trim set since by recognizability a tiling cannot have more than one supertile at any given location.  In a slight abuse of notation we define $\Tsp_{I,U}$ to be the set of all tilings that contain an $n$-supertile from $I$ at a location determined by $U$\footnote{The set $I$ of $n$-supertiles corresponds, by recognizability, to a trim set of patches of ordinary tiles, each of which is larger than its supertile by some recognizability radius.}.  
 
 We define a measure $\rho_n$ on $\ppp_n$ defined by $\rho_n(I) = \frac{\mu(\Tsp_{I,U})}{Vol(U)}$.   When $\mu$ is a probability measure, each $\rho_n$ is {\em volume normalized} in that
 $\int_{P \in \ppp_n} Vol(P) d\rho_n = 1$.   This can be seen as follows.  Suppose that $I_1, I_2, ..., I_j$ is a partition of $\ppp_n$ into sets of very small diameter and that $P_k \in I_k$ for all $k = 1, ..., j$.  Then $  \bigcup_{P_k }\Tsp_{P_k,Vol(P_k)}\approx \Tsp$ and so $\sum_{k= 1 }^j \rho_n(P_k)Vol(P_k)\approx \mu(\Tsp) =1 $.
 
The transition map $\M_{n,N}$ can be thought of as inducing a map from measures on $\ppp_N$ to measures on $\ppp_n$. 
For a fixed trim set $I \subset \ppp_n$, the function $A_{n,N}(I,Q)$ can be defined as the number of times an $n$-supertile from $I$ is contained in the $N$-supertile $Q$.  Since each such $Q$ contains only finitely many $n$-supertiles this function takes values in the nonnegative integers.

  Let $n <N$ be fixed, let $\nu_N$ be a measure on $\ppp_N$ and let $I \subset \ppp_n$ be a trim set.   We define a measure $\nu_n$ on $\ppp_n$ as
$$\nu_n(I) = (\M_{n,N} \nu_N)(I) = \int_{Q \in \ppp_N} \M_{n,N}(I,Q) d\nu_N$$
We say that  a sequence of measures $\{\nu_n\}_{n = 0}^\infty$ is {\em transition-consistent} if whenever $n < N$, $\nu_n = \M_{n,N} \nu_N$.  This means that the frequencies that $\nu_n$ assigns to $n$-supertiles is consistent with the frequencies that $\nu_N$ assigns to $N$-supertiles, for any $n \le N$.   
 
The fact that translation-invariant probability measures give rise to sequences of volume normalized and transition consistent supertile measures and vice versa is the subject of the next theorem. 

\begin{thm}\label{measures_are_sequences} \cite{Me.Lorenzo.ILCfusion}
  Let $\rrr$ be a 
  fusion rule that is van Hove and
  recognizable.  Each translation-invariant Borel probability measure
  $\mu$ on $\Omega_\rrr$ gives rise to a sequence of volume normalized
and transition consistent measures $\{\rho_n\}$ on $\ppp_n$.
Moreover, for any  trim set of patches $I$ 
\begin{equation} \label{measure_from_frequency}
freq_{\mu}(I) = \lim_{n \to \infty} \int_{P \in \ppp_n} \#( I \text{ in } P) d\rho_n,
\end{equation}
where $\# ( I \text{ in } P)$ denotes the number of translates 
of patches in the family $I$ that are subsets of $P$. 
Conversely, each sequence $\{\rho_n\}$ of volume normalized and transition consistent
measures 
corresponds to exactly one invariant measure $\mu$ via equation (\ref{measure_from_frequency}).  
\end{thm}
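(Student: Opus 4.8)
The plan is to prove the two implications separately and then check that the constructions are mutually inverse; recognizability and the van Hove property of $\rrr$ are used throughout exactly as in \cite{Me.Lorenzo.ILCfusion}. First I would go from a measure to a sequence. Fix a translation-invariant Borel probability measure $\mu$ on $\Tsp_\rrr$ and, for each $n$, choose the control points of the $n$-supertiles so that every $P \in \ppp_n$ contains a fixed $\epsilon_n$-ball about the origin. For a trim set $I \subset \ppp_n$ and an open $U$ of diameter less than $\epsilon_n$, recognizability forces a tiling in $\Tsp_{I,U}$ to contain exactly one supertile from $I$ placed in exactly one way, so $U \mapsto \mu(\Tsp_{I,U})$ is a finitely additive, translation-invariant set function on small open sets and hence a constant multiple of Lebesgue measure; that constant is $\rho_n(I)$. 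Countable additivity of $\rho_n$ descends from that of $\mu$. Volume-normalization is the approximation already sketched before the theorem: partition $\ppp_n$ into trim pieces of small diameter with representatives $P_k$, note that the cylinder sets $\Tsp_{P_k,\supp(P_k)}$ cover $\mu$-almost all of $\Tsp_\rrr$ because (by recognizability) every tiling has a unique $n$-supertile covering the origin, and pass to the limit over finer partitions to obtain $\int_{\ppp_n} Vol(P)\,d\rho_n = 1$.

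Next I would establish transition-consistency and the frequency formula together. For $n < N$ and a trim $I \subset \ppp_n$, disintegrate $\Tsp_{I,U}$, up to a $\mu$-null set, according to which $N$-supertile $Q$ contains the distinguished $n$-supertile and where $Q$ sits; since $Q$ contains precisely $\M_{n,N}(I,Q)$ translated copies of supertiles from $I$, integrating over $Q \in \ppp_N$ against $\rho_N$ and dividing by $Vol(U)$ gives $\rho_n(I) = (\M_{n,N}\rho_N)(I)$. For the frequency formula, let $I$ now be a trim set of patches of ordinary tiles. For each $n$, split the placements of patches from $I$ inside a tiling into those contained in a single $n$-supertile and those straddling the boundary between supertiles; rewriting the interior contribution via the $\rho_n$ just constructed yields $freq_\mu(I) = \mu(\Tsp_{I,U})/Vol(U)$ up to an error bounded by the $\mu$-mass of placements meeting an $n$-supertile boundary, and the van Hove property sends this error to $0$ as $n \to \infty$, which is exactly equation~(\ref{measure_from_frequency}).

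For the converse I would run the construction backwards. Given volume-normalized, transition-consistent $\{\rho_n\}$, define a premeasure on cylinder sets by $\mu(\Tsp_{I,U}) = \rho_n(I)\,Vol(U)$ for trim $I \subset \ppp_n$ and small $U$; transition-consistency is precisely what makes this assignment independent of the level $n$ at which a given cylinder set is written (refine into $N$-supertile cylinder sets and sum). Since cylinder sets of trim sets generate the metric topology (the Proposition in Section~\ref{Borel.topology.trim.sets}) and $\Tsp_\rrr$ is compact, I would check finite and then countable additivity on the algebra they generate and invoke Carath\'eodory to extend to a Borel measure; volume-normalization gives $\mu(\Tsp_\rrr)=1$ and translation-invariance is built into the $Vol(U)$ factor. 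Feeding this $\mu$ back into the first construction returns $\{\rho_n\}$, and conversely a translation-invariant measure is determined by its values on cylinder sets of trim sets, so the two constructions are mutually inverse; in particular the $\mu$ attached to $\{\rho_n\}$ is unique and satisfies~(\ref{measure_from_frequency}).

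I expect the main obstacle to be the limit in the frequency formula: one must show that the $\mu$-mass of patch-placements straddling $n$-supertile boundaries vanishes as $n \to \infty$, which needs the van Hove estimate to hold uniformly over the (possibly infinite) supertile set $\ppp_n$, not merely for a single supertile. A secondary subtlety is making the disintegration that proves transition-consistency genuinely rigorous, i.e. checking that the exceptional set of boundary overlaps is $\mu$-null, and, in the converse direction, verifying countable additivity of the cylinder-set premeasure before Carath\'eodory can be applied.
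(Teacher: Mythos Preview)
The paper does not actually prove this theorem: it is stated with the citation \cite{Me.Lorenzo.ILCfusion} and the surrounding text explicitly says ``we refer the reader to \cite{Me.Lorenzo.ILCfusion} for proofs and more details.'' What the paper does contain is the setup in the paragraphs immediately preceding the statement --- the definition $\rho_n(I) = \mu(\Tsp_{I,U})/Vol(U)$, the partition argument for volume normalization, and the definition of $\M_{n,N}$ acting on measures --- and your outline follows that setup step for step. So your proposal is consistent with the paper's framing, and the overall architecture (measure $\to$ $\{\rho_n\}$ via cylinder frequencies, transition-consistency from counting $n$-supertiles inside $N$-supertiles, the van Hove boundary estimate for the frequency formula, and Carath\'eodory for the converse) is the standard one used in the cited source.

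Your self-identified obstacles are the right ones. The uniformity of the van Hove estimate over $\ppp_n$ is exactly where compactness of the supertile label sets $\lcal_n$ is needed, and the $\mu$-nullity of supertile boundaries follows from translation-invariance together with the fact that boundaries have Lebesgue measure zero. There is no genuine gap in your plan, but since the paper itself defers the proof, there is nothing further to compare against here.
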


This means that the invariant measures for a recognizable van Hove fusion system are almost completely determined by the transition maps, a fact that is consistent with substitution sequence and self-similar tiling theory.   In those cases one looks at the transition matrix and uses the Perron-Frobenius theorem to conclude when the system is uniquely ergodic.   In the FLC fusion case the transition maps are always matrices and we can parameterize the set of all possible measures by a Choquet simplex related to the
matrix system.

 \subsection{ILC fusion tilings with finitely many $n$-supertiles}
When there are finitely many prototiles and finitely many $n$-supertiles at every level, then any set of $n$-supertiles contains only finitely many patches of a given size.   These patches are said to be {\em literally admitted} by the fusion rule.  If the tiling space has infinite local complexity there can be patches that are not literally admitted; we call these {\em admitted in the limit}.    An easy corollary to Theorem \ref{measures_are_sequences} is that the patches that are admitted in the limit have frequency 0.   More precisely, if we take any trim set $I$ of patches that are admitted in the limit, we clearly have that $\#(I \in P) = 0$ for any supertile $P$, so $freq_\mu(I) = 0$ by equation \ref{measure_from_frequency}.

In fact, the set of literally admitted patches is countable, and they support the frequency measure.
Thus the frequency measure is atomic and for any trim set of patches $I$,
$freq_\mu(I) = \sum_{P \in I} freq_\mu(P)$.  This is Theorem 4.4 of \cite{Me.Lorenzo.ILCfusion}.
 
Often in the planar situation we can prove topological weak mixing or obtain strong restrictions on the topological spectrum by combining Theorem \ref{Kenyon.thm} and Corollary \ref{Spectrum.corollary}.   The infinite local complexity comes from fault lines that in many examples have arbitrarily small shears.   If this happens in two independent directions the tiling space in question is topologically weakly mixing.
 
% \subsection{ILC fusion tilings with continuous supertile sets}
 %??
 
 \subsection{Homeomorphism of ILC and FLC tiling spaces}
 Every non-periodic FLC tiling space is homeomorphic, or even topologically conjugate, to an ILC tiling space obtained by collaring with infinite collars.  It is often possible to obtain an ILC tiling space from an FLC one geometrically, too.  That makes it important to know whether a given space with infinite local complexity can be converted into one with finite local complexity.   This classification problem is open in all dimensions except dimension one.

% \subsubsection{A positive result in one dimension}
  In one dimension it is possible to detect when a tiling space with ILC is homeomorphic to a tiling space with FLC.  It is necessary but not sufficient for the tiling space to have a totally disconnected transversal and expansive dynamics since all tilings with FLC have those properties.  However, it is possible to construct examples of ILC tilings that have these properties but cannot be homeomorphic to FLC tiling spaces, for instance by deforming the tiles of a solenoid extension \cite{Me.Lorenzo.ILCfusion}.  The property a one-dimensional tiling space $(\Tsp, \R)$ needs isn't expansivity, it is {\em strong expansivity}: %is {\em strongly expansive} if 
 the first return map to the transversal should be expansive as a $\Z$-action.

% This turns out to be essential for a one-dimensional tiling space to be homeomorphic to one with finite local complexity.  The stretched solenoid extension referred to above turns out to be expansive but not strongly expansive.
 
\begin{thm} \label{One-D-FLCresult}\cite{Me.Lorenzo.ILCfusion}
 If a one-dimensional tiling space is strongly expansive and the canonical transversal is totally disconnected, then it is homeomorphic to a one-dimensional tiling space with finite local complexity.
\end{thm}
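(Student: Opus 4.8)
The plan is to realize $\Tsp$ as the suspension (mapping torus) of the first-return map to its transversal, then replace the return-time function by the constant $1$ and recognize the result as an FLC tiling space. First I set up the suspension picture. Write $\Xi$ for the canonical transversal $\transversal$ and let $r\colon \Xi \to (0,\infty)$ send a tiling to the length of the tile that has a control point at the origin. Because tile supports are drawn from the compact set $\sss$, the function $r$ is continuous and bounded away from $0$ and $\infty$, and the first-return map $\phi(\T) = \T - r(\T)$ is a homeomorphism of the compact space $\Xi$. As is standard in one-dimensional tiling theory, $\Tsp$ is then homeomorphic (indeed topologically conjugate) to the suspension $Y_r := (\Xi \times \R)/\!\sim$, where $(\T, s + r(\T)) \sim (\phi(\T), s)$, carrying its suspension flow.

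The crux is to use the two hypotheses to show that $(\Xi, \phi)$ is topologically conjugate to a subshift over a \emph{finite} alphabet. Strong expansivity says exactly that $\phi$ is expansive as a $\Z$-action, while the transversal being totally disconnected makes the compact metric space $\Xi$ zero-dimensional. By the classical characterization of subshifts, an expansive homeomorphism of a zero-dimensional compact metric space is conjugate to a subshift over a finite alphabet: choose a finite clopen partition $\mathcal Q$ of $\Xi$ whose pieces have diameter below an expansiveness constant, and verify that the itinerary map $\T \mapsto (\mathcal Q(\phi^n\T))_{n \in \Z}$ is a continuous injection (injectivity is precisely expansiveness) onto a closed shift-invariant subset $\Sigma \subset \mathcal Q^{\Z}$; compactness upgrades this to a conjugacy $(\Xi, \phi) \cong (\Sigma, \sigma)$.

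It remains to change the roof and recognize the outcome. For any two continuous positive roof functions on a compact base the associated suspensions are homeomorphic via the fiberwise rescaling $[(\T, s)] \mapsto [(\T,\, s\, r_2(\T)/r_1(\T))]$ on the fundamental domain $\{0 \le s < r_1(\T)\}$; one checks it is well defined and continuous across the identification seam (both one-sided limits at $(\T, r_1(\T))$ land on $(\phi\T, 0)$) and is a bijection with continuous inverse, hence a homeomorphism by compactness. Taking $r_1 = r$ and $r_2 \equiv 1$ gives $\Tsp \cong Y_r \cong Y_1$; since the roof is now the constant $1$ on both sides, the conjugacy $(\Xi,\phi)\cong(\Sigma,\sigma)$ induces a homeomorphism $Y_1 \cong Z$, where $Z$ is the constant-roof suspension of $(\Sigma,\sigma)$. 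Finally $Z$ is literally the tiling space obtained from the subshift $\Sigma$ by making each letter of the finite alphabet $\mathcal Q$ a unit-length tile labeled by that letter: it has one tile length and finitely many labels, hence finitely many prototiles, hence finite local complexity. Chaining the homeomorphisms proves the theorem.

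I expect the subshift step to be where the real work lies. Expansiveness alone is easy to exploit; the subtle point is ensuring the alphabet can be taken \emph{finite}, and this is exactly what compactness of $\Xi$ together with zero-dimensionality provides — it is also precisely where both hypotheses get used. (This is consistent with their necessity: an FLC tiling space already has a totally disconnected transversal and a strongly expansive first-return map, so nothing weaker can suffice.) The roof-change step is routine apart from an honest verification of continuity at the seams, and the suspension identification in the first step is standard in the tiling literature.
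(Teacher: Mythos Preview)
The paper does not actually prove this theorem; it is quoted from \cite{Me.Lorenzo.ILCfusion} without proof, so there is no in-paper argument to compare against. That said, your approach is correct and is essentially the standard one: identify $\Tsp$ with the suspension of the first-return map on the transversal, invoke the classical fact that an expansive homeomorphism of a compact zero-dimensional metric space is conjugate to a subshift on a finite alphabet (this is exactly where both hypotheses are consumed), then rescale the roof to a constant and read off an FLC tiling space. Your self-assessment of where the content lies is accurate: the subshift conjugacy is the substantive step, and the suspension and roof-change identifications are routine. One small remark: the fiberwise rescaling you describe yields a homeomorphism but not a flow conjugacy, which is fine since the conclusion only asks for a homeomorphism; you note this implicitly, but it is worth saying explicitly since the paper elsewhere distinguishes homeomorphism from topological conjugacy.
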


There is a more general definition of strong expansivity for tilings of $\R^d$, but it seems unlikely that the $d$-dimensional analogue of Theorem \ref{One-D-FLCresult} holds.   A potential counterexample that is a variant of the pinwheel tilings appears in \cite{Me.Lorenzo.ILCfusion}.   It has uncountably many ergodic measures and therefore is probably not homeomorphic to a tiling space with finite local complexity.

\section{Analysis of our three main examples.}
\label{example.analysis}

\subsection{Example \ref{Variable.size.example}: variable size tile lengths}
Consider $\Tsp$ to be the tiling space admitted by the substitution in example \ref{Variable.size.example}.    Recall that the tile labels and lengths are in $[1,3]$, the expansion factor is $3/2$, and an expanded tile is decomposed into two tiles of lengths $1/3$ and $2/3$ of the expanded tile if it is larger than $3$ and otherwise not subdivided.

\label{vss.analysis}
\subsubsection{Minimality}
%We compute incorrectly:
%$$\M_{n,n+1}(x,y) = \left \{\begin{array}{ccccc}1 & \text{ if } x = 2y/3 \text{ and } x \le 2(3/2)^n \\
%1 & \text{ if } x= y/3 \text{ and } x > 2(3/2)^n \\
%1 & \text{ if } x = 2y/3 \text{ and } x > 2(3/2)^n \\
%0 & \text{ otherwise }
%\end{array} \right\}
%$$
The tiles in any tiling $\T$ all have lengths of the form $(2^j/3^k) y$, where $y \in [1,3]$ and $j$ and $k$ are nonnegative integers.   These lengths are dense in $[1,3]$ for any fixed $y$, so we know that it is possible that the orbit of $\T$ is dense in $\Tsp$.    We prove it by establishing primitivity.

The $N$-supertile of length $L$ is made up of $n$-supertiles of lengths $(2^j/3^k)L$, for nonnegative integers $j \le k$ in a range of values that depends more on $N$ than it does on $L$.   The largest value of $k$, denoted $\bar{k}$,  comes from the rightmost $n$-supertile of $L$ and has the property that $(2/3)^{\bar{k}} L \in [(3/2)^n, 3(3/2)^n]$.   The smallest value of $k$, denoted $\underline{k}$, comes from the leftmost $n$-supertile and has the property that $(1/3)^{\underline{k}} L \in [(3/2)^n, 3(3/2)^n]$.   All values of $k$ between these two values occur, and all values of $j$ for which $(2^j/3^k) L$ lie in range also appear, with the $j$s being consecutive integers in the range.   

Consider a set of $n$-supertiles of diameter $\epsilon$, so that all of the lengths are within $\epsilon$ of each other.   We can choose $N$ such that for any length $L$, the maximum of $k$ is sufficiently large so that every interval of size $\epsilon$ contains enough points of the form $(m /3^{\bar{k}})L$ that at least one of these points must be of the form $(2^j/3^k) L$, where $\underline{k} \le k \le \bar{k}$.   This proves the substitution is primitive.   Theorem \ref{fusion.minimal.primitive} implies that the variable size substitution example must therefore have a minimal dynamical system.   

\subsubsection{Weak mixing}  This system has no measurable or topological eigenvalues, as shown in this clever proof from L. Sadun.  Let $\Tsp$ be the tiling space from Example \ref{Variable.size.example} and let $\Tsp_\lambda$ be the tiling space obtained by expanding every tiling in $\Tsp$ by the linear map $f(x) = \lambda x$ for some $\lambda > 1$.  Then eigenvalues of $\Tsp_\lambda$ are exactly the eigenvalues of $\Tsp$ multiplied by $1/\lambda$.   

On the other hand we can consider the tiles in $\Tsp_\lambda$, which live in $[\lambda, 3\lambda]$, to be lengths for supertiles in $\Tsp$, and apply the decomposition map for supertiles to each tiling in $\Tsp_\lambda$.   This will result in tilings from $\Tsp$ and we have a map from $\Tsp_\lambda$ to $\Tsp$ that is a bijection everywhere except on the set of measure 0 for which the subdivision rule is discontinuous.  This means that $\Tsp_\lambda$ and $\Tsp$ are measurably conjugate and therefore have the same eigenvalues.% Thus the measurable spectrum of $\Tsp_\lambda$ will equal that of $\Tsp$ divided by $\lambda$.   

These two facts together mean that the set of eigenvalues of $\Tsp$ (i.e., its spectrum) is invariant by scaling by any $\lambda > 1$.   This means that the spectrum must either be  $\R$, the nonnegative reals,  the nonpositive reals, or 0.   However, we know that since $\Tsp$ is separable the spectrum must be countable.   Thus the only possible measurable eigenvalue is $0$.    Since continuous eigenfunctions are measurable this also implies topological weak mixing.

\subsubsection{Invariant measure}  The analysis of the related fusion rule in \cite{Me.Lorenzo.ILCfusion} can be adapted to find the sequence $\{\rho_n\}$ of transition-consistent and volume-normalized measures on $\ppp_n $.  We abuse notation and consider the set of $n$-supertiles to be the interval $[(3/2)^n, 3 (3/2)^n]$, in which case the measures of sets of supertiles is given by $\rho_n([a,b]) = \int_a^b f_n(x) dx$, where $dx$ is ordinary Lebesgue measure.   The only choice allowing a transition-consistent and volume-normalized sequence is
$$f_n(x) = \left\{ \begin{array}{cc} \frac{1}{ (3 \ln 3 - 2 \ln 2) x^2} & (3/2)^n \le x \le 2(3/2)^n \\  \frac{3}{ (3 \ln 3 - 2 \ln 2) x^2}& 2(3/2)^n < x \le 3(3/2)^n  \end{array} \right. $$    Although it is not obvious, it is true that the system is uniquely ergodic.

The invariant measure is invariant under scaling in the sense that $ \mu(\Omega_{I,U}) = \mu(\Omega_{\lambda I, \lambda U})$ for a trim set $I$ and denoting by $\lambda I$ the set of patches obtained by rescaling and subdividing $I$.  This probably implies that the measure is absolutely continuous.

\subsubsection{Transverse topology}   As usual for tilings of the line, the transversal $\transversal$ is the set of all tilings that have a tile endpoint at the origin.  Although it appears that the transversal might be connected, it is in fact a Cantor set.  

To show that any given tiling $\T \in \transversal$ is not isolated, consider any $\epsilon > 0$ and an $N$-supertile in $\T$ that contains $B_{2/\epsilon}(0)$.   (Even though this supertile may not be a natural supertile of $\T$, it must exist since $\T$ is admitted by the substitution rule).  There are lots of supertiles that are within $\epsilon/2$ of this supertile in the patch metric, since we can choose a supertile as close to the original in length as necessary to ensure they subdivide to comparable tiles.   Select one and let $\T' \in \transversal$ be a tiling with this supertile at the origin.   Then $d(\T, \T') < \epsilon$.

To show that the transversal is totally disconnected consider two tilings $\T, \T' \in \Tsp$ that are close, so that they very nearly have the same patch of tiles in a large ball $B$ around the origin.  All the tiles in $\T$ are multiples $2^j/3^k$ of each other, as are those in $\T'$.   Thus in $B$ all the tiles in each are multiples by the {\em same} $2^j/3^k$ 
of their respective tiles at the origin.   Suppose the tile for $\T$ at the origin is slightly larger than that in $\T'$.  All the tiles in $B$ in $\T$ are then larger than those of $\T'$.   Outside of $B$ there will be a tile in $\T'$ that is of size $3 - \epsilon$ such that in the corresponding region in $\T$ the $1$-supertile is of size slightly larger than $3$ and thus is broken into two tiles of sizes slightly greater than $1$ and $2$.   Using this difference we can partition the transversal into two clopen sets:  one containing all tilings with a tile of size less than or equal to 3/2 at the left side of this location and one containing all tilings with a tile of size greater than 3/2 there.

\subsubsection{Complexity}  Let $N(\epsilon, L)$ be the complexity function that counts the minimum number of patches of $d_L$-radius $\epsilon$ it takes to cover $\Tsp$.  Recall that a $d_L$-ball of radius $\epsilon$ is the set of all tilings that have a patch that agrees with a fixed tiling on $[-1/\epsilon, L + 1/\epsilon]$ up to $\epsilon$.   We concern ourselves with the situation where $L$ is much greater than $1/\epsilon$. 

In order to specify the fixed tilings we use as the `centers' of the balls, we need to specify up to $\epsilon$ all patches of size $L + 2/\epsilon$.   To do so, we need to find the smallest supertile that contains an interval of that size.   The length of that supertile is on the order of $L$ and we need to specify it up to $\epsilon$, so there are on the order of $L/\epsilon$ choices for that.  We also need to specify precisely where within the supertile we are up to $\epsilon$, and that gives us on the order of $L/\epsilon$ choices also.
This means that the complexity goes as $L^2/\epsilon^2$.   This is polynomial complexity, and the $\epsilon$-entropy is zero.

\subsection{Example \ref{DPVexample}: Direct product variation}
\label{DPV.analysis}
We begin our analysis of the DPV tiling space $\Tsp$ with an alternative description of how to obtain it by projection from a structure in $\R^3$. Then we show that it is minimal and show how to compute the invariant measure.   The topological spectrum and complexity are computed and the transversal is discussed for varying values of tile widths.   We conclude the section with a brief description of how the discussion would extend to other DPVs.

\subsubsection{Projection method}  A standard trick with one-dimensional substitution sequences is to construct the so-called `broken line' or `staircase' of a substitutive sequence.   The staircase lives in $\R^{|\aaa|}$ and is constructed iteratively by starting at the origin,  taking a step in the $\vece_{x(0)}$ direction, then one in the $\vece_{x(1)}$ direction, and so on.  It is well known that such a staircase will approximate the Perron eigenline of the substitution matrix, and projection of the staircase onto the Perron eigenline along the weak eigendirections yields the natural tile lengths for the corresponding self-similar tilings.  Projection of the staircase onto other lines or in other directions produces different tilings that may or may not be conjugate to each other, depending mostly (it seems) on the expansion factor of the system.  When the expansion matrix is Pisot all the points on the staircase lie within a bounded distance of the eigenline.   When it is not the staircase can wander arbitrarily far away from the eigenline, but it  comes close to the eigenline repeatedly.   %This is probably discussed further in the [French guys, Bernd and Marcy] chapter of this volume.

In a planar tiling context there are similar constructions when the expansion constant is Pisot \cite{ABS} or if other special conditions hold \cite{Gensubref}.   In those cases one constructs a `discrete plane' made up of two-dimensional facets in some $\R^n$.  The conditions in our examples do not satisfy these conditions, but nevertheless it is possible to construct a discrete surface that projects onto our DPV tilings.   This surface has holes that cannot be seen when we project in a direction that forms a tiling.  We explain the method for our basic example here and a description of how to generalize it appears at the end of this section.

The stepped surface will appear in $\R^3$ and can be constructed by substitution.   The $A = a \times c$ tile corresponds to the unit square spanned by the origin, $(1,0,0)$ and $(0,0,1)$.  The $B = b \times c$ tile corresponds to the unit square spanned by the origin, $(0,1,0)$, and $(0,0,1)$.  The substitution rule is pictured in figure \ref{stepped.subs}.

\begin{figure}[ht]
\parbox{1in}{\includegraphics[width=.75in]{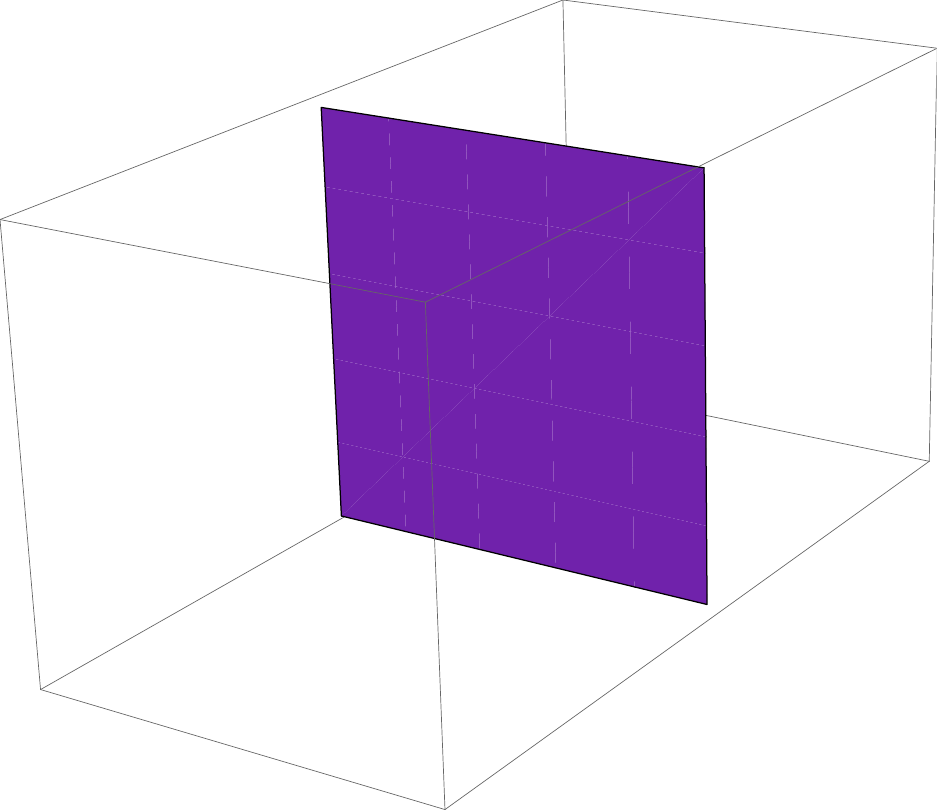}}
\raisebox{.2in}{\text{\Large $\to$}}
\raisebox{.2in}{\parbox{2in}{\includegraphics[width=1in]{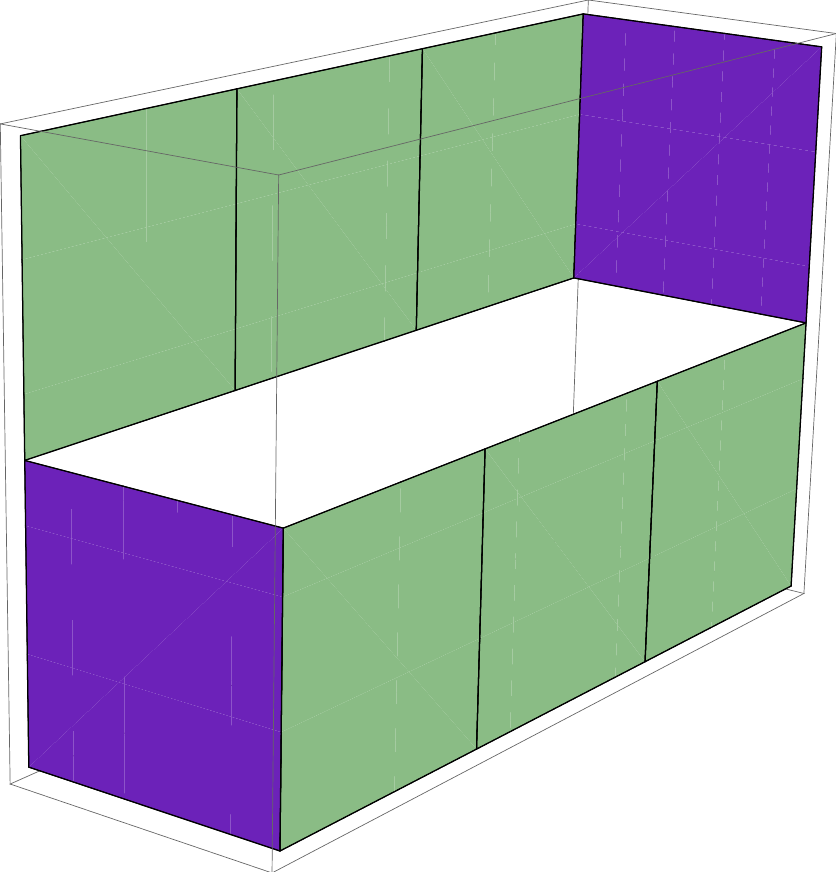}}}
%\hspace{1in}
\parbox{1in}{\includegraphics[width=.75in]{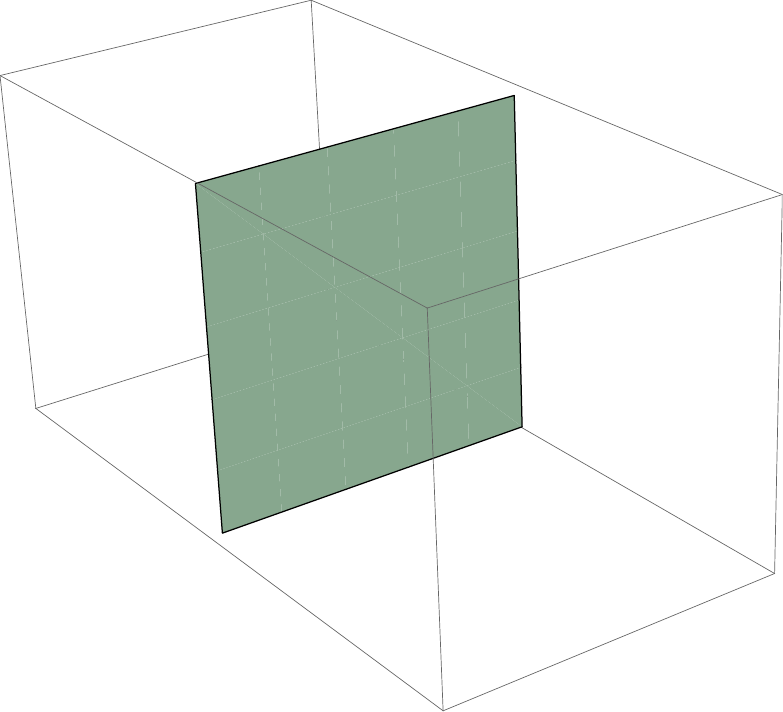}}
\raisebox{.2in}{\text{\Large $\to$}}
\raisebox{.2in}{\parbox{1.2in}{\includegraphics[width=.9in]{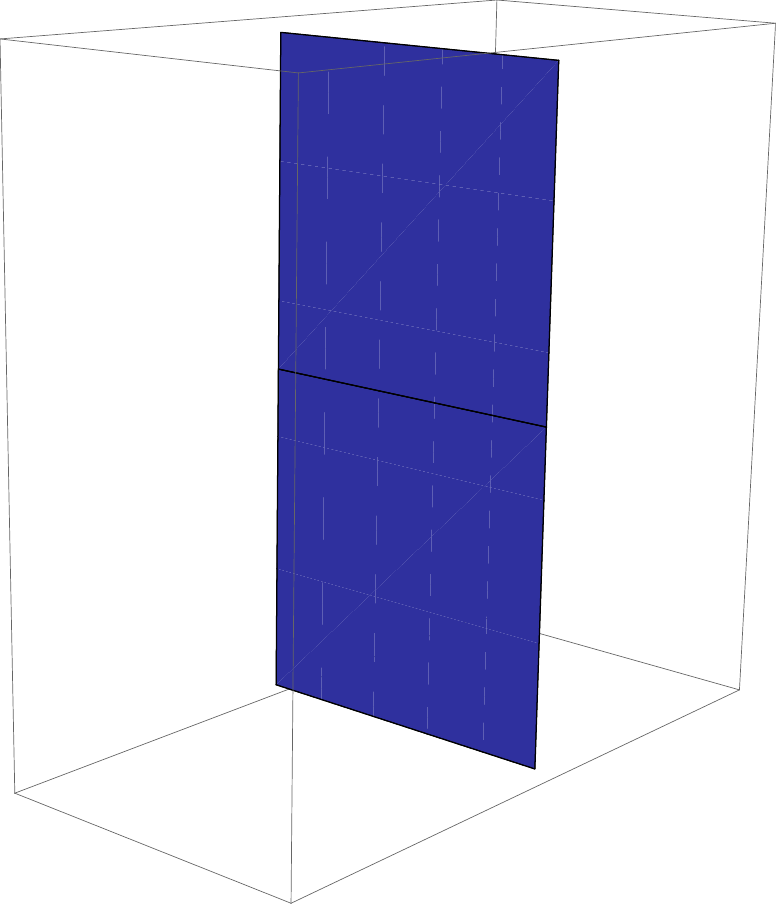}}}
\caption{The facet substitution.}
\label{stepped.subs}
\end{figure}

It can be proved that the substitution can be iterated indefinitely as long as one follows concatenation rules; alternatively the $n$-supertiles can be seen as fusions of the $(n-1)$-supertiles according to the same basic combinatorics as the original fusion.   In figure \ref{stepped.its} we show a $3$-supertile at an angle different than one of the projection angles we will use to see the planar tiling.    From this angle the holes in the surface are visible, and since the facet of the hypercube corresponding to an $a \times b$ is not an element of our tile set the holes are parallel to the $xy$-plane.  To obtain our planar DPV with parameters $a, b, c$, we simply take the stepped surface and project it using the matrix $\left(\begin{matrix}  a & b & 0\\ 0 & 0 & c\end{matrix}\right)$.

\begin{figure}[ht]
\includegraphics[width=4in]{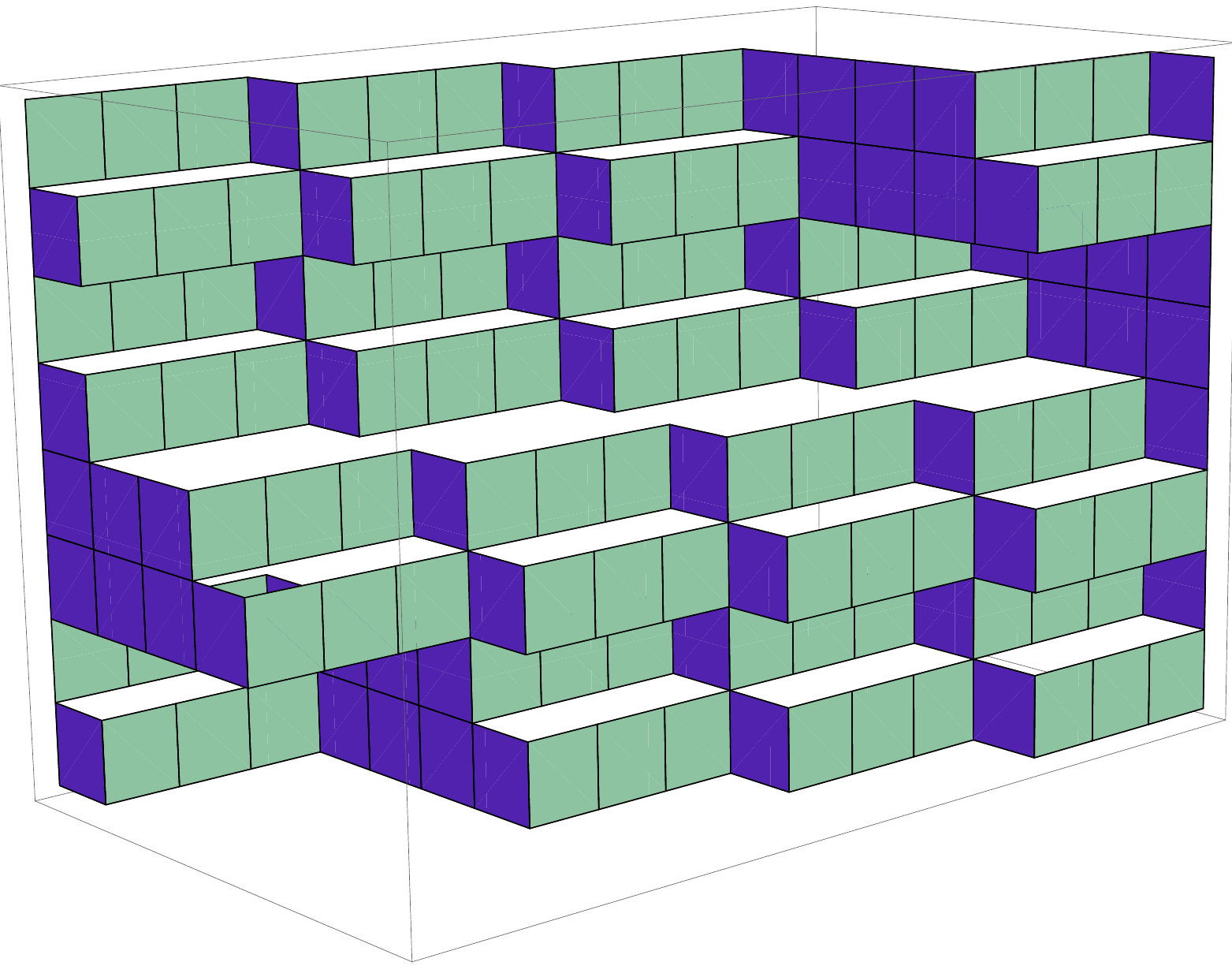}
\caption{The $3$-supertile of type $a \times c$.}
\label{stepped.its}
\end{figure}

Now we begin to see why the fault lines occur.  The holes correspond to pieces of fault line and we see a large hole halfway up the supertile that will continue to grow as we iterate the substitution.  In the limit the top and bottom halves become totally severed and this is reflected in $\Tsp$ by tilings with arbitrary shifts along the fault line.

\subsubsection{Minimality and invariant measure}  Our example has transition matrix $M = \left(\begin{matrix} 2 & 2 \\ 6 & 0\end{matrix}\right)$ as a standard substitution rule and so $\M_{n,N} = M^{N-n}$.    Since $M$ is a primitive matrix we immediately see that the fusion rule must be primitive and so its dynamical system is minimal.

Using results from \cite{Me.Lorenzo.ILCfusion} we know that for any width parameters $a$ and $b$ the dynamical system will be uniquely ergodic.  The frequency measure on patches is atomic and gives all patches that live in some $n$-supertile a positive frequency.   All patches that are admitted only in the limit and never appear in any $n$-supertile have measure 0.   

The precise nature of the ergodic measure depends on $a$ and $b$ for several reasons.   First off, if $a$ and $b$ are rationally related then the tiling space has finite local complexity.   The least common denominator $q$ determines the horizontal offsets, and all all multiples of $1/q$ occur.   This means that the patches that we are measuring the frequency of are different as we change the parameters.  If $a$ and $b$ are rationally independent then we have infinite local complexity occurring along horizontal fault lines (i.e., there is a fracture in the direction of $(1,0)$).  
Patches with frequency 0 appear.

  The Perron eigenvalue is $\lambda = 1 + \sqrt{13}$\footnote{which is the product of the length expansions of the one-dimensional substitutions.} and it has left eigenvector $\vecv = (\lambda, 6)$.    This vector, when normalized by volumes of $P_n(A)$ and $P_n(B)$, gives the frequencies $\rho_n(A)$ and $\rho_n(B)$ of those supertiles.     
The volume of $P_n(A)$ or $P_n(B)$ can be computed in two ways for a general $a$ and $b$.  Since $Vol(A) = ac$ and $Vol(B) = bc$, we see that
$(Vol(P_n(A)), Vol(P_n(B))) = (ac, bc)*M^n$.   On the other hand the volume of $P_n(A)$ or $P_n(B)$ is $2^n c$ times the length of the $n$th one-dimensional substitution of the $a$ or $b$ tile.   
To find $\rho_n$, we see that it is $(\lambda, 6)/K$, where $K = \lambda P_n(A) +  6P_n(B)$.
In the special case where $a$ and $b$ are natural tile lengths for the horizontal substitution, the tiling is self-affine and $K$ is a constant multiple of $\lambda^{-n}$. 

\subsubsection{Topological Spectrum}
No matter what our choice of $a$ and $b$ are there will be eigenvalues of the form $(0, c\Z[1/2])$.
This is because there are functions that detect only where a tiling is in the vertical, solenoid hierarchy.  Whether or not there is any horizontal spectrum depends on whether $a$ and $b$ are rationally related.

If $a$ and $b$ are irrationally related, the fault lines provide us with fractures of the form $(k,0)$ for arbitrarily small $k$.   By Corollary \ref{Spectrum.corollary} we see then that any eigenvalue must be of the form $(0,y)$, so the spectrum we retain is that of the solenoid.

If $a = pb/q$, then we obtain additional discrete spectrum of the form $(kp/a, 0)$, for $k  \in \Z$.  The eigenfunctions keep track of the horizontal offsets of, say, the tile containing the origin.   $f(\T)$ would equal the displacement of $\T$ horizonally from the nearest $1/q$ piece of the tiling.

\subsubsection{Transverse topology}  The topology of the transversal depends on whether $a$ and $b$ are rationally related.   If they are, then the tiling space has FLC and the transversal is a Cantor set.   However, when $a$ and $b$ are not rationally related the topology of the transversal is not well understood.   We show that the transversal is not a Cantor set and that in particular it is not totally disconnected.  For concreteness, assume the control points of the tiles are their centers of mass.  

Any tiling in the transversal that has a fault line is in a connected component of the transversal that contains all possible shifts of the half-plane that does not contain the origin.  Consider a point $\T \in \transversal$  that does not have a fault line.  It is arbitrarily close to tilings that do have fault lines: any ball around the origin is contained in some large supertile of $\T$, and that supertile can be contained in a tiling with a fault line.   Thus $\T$ exactly agrees with a continuum of tilings in $\transversal$ on this supertile.

In this example there are infinitely many connected components that are distinguished from one another by where the tiling is in the vertical, solenoid hierarchy.   However it is not clear whether a location in the hierarchy contains multiple connected components that are distinguished in some way by the horizontal substitution.  Understanding the transversal for tilings with both vertical and horizontal fault lines is even more mysterious.

\subsubsection{Complexity} The complexity of $\Tsp$ will depend on the parameters $a$ and $b$.   In the case where $a$ and $b$ are rationally related and $\Tsp$ has FLC we get a tiling space that has higher complexity than a planar self-similar tiling because of the unbounded nature of the holes.   For simplicity suppose $a$ and $b$ are integer multiples of $1/q$, for $q \in \N$.  Suppose that $L$ is greater than $1/\epsilon$ and both are greater than $q$. We need to count how many patches of size $L$ there are, up to $\epsilon$.  There is some minimum size of supertile for which every patch of size $L$ is contained entirely within a supertile or across the boundary of 2, 3, or 4 supertiles.   There are only a finite number of choices for such patterns since we only have two supertile types.  Inside of any such pattern we a bounded multiple of $L^2/\epsilon^2$ places to put the corner of a patch; moreover for any patch that has two or more supertiles there are about $qL$ ways they can fit together.   This implies that
the complexity as $L \to \infty$ is bounded above and below by bounded multiples of $ q L^3/\epsilon^2$.

Now suppose instead that $a$ and $b$ are irrationally related.   Most of the argument is the same except that now there are $L/\epsilon $ ways to fit two or more supertiles of size $L$ together.   This leaves us with a complexity bounded above and below by bounded multiples of $L^3/\epsilon^3$.   In either case we have polynomial complexity $C(\epsilon) L^3$, with the exponent $3$ being notable because it is greater than the ambient dimension of the tiling spaces.  Although the system still has entropy $0$, it is more complex than planar FLC self-similar tilings.

\subsubsection{Other DPVs}  Any DPV can be seen as the projection of a higher-dimensional stepped surface, but we will keep our discussion in the plane for simplicity.   If the alphabets are $\aaa$ and $\bbb$, then there are $|\aaa||\bbb|$ tile types, all of the form $a_i \times b_j$.  The unit hypercube in $\R^{|\aaa| + |\bbb|}= \R^\numa \times \R^\numb$ has $\left((|\aaa| + |\bbb|) \text{ choose }2\right)$ two-dimensional facets that contain the origin.   We need $\numa \numb$ of them for our stepped surface, which we construct as follows.  Let the standard basis vectors in $\R^\numa$ correspond to the letters in $\aaa$ and let the standard basis vectors of $\R^\numb$ correspond to those in $\bbb$, so that the tile $a_i \times b_j$ is represented by the corresponding facet that contains the origin, one standard direction in $\R^\numa$, and one in $\R^\numb$.   The remaining hypercube faces that contain the origin are those that take both other vectors in either $\R^\numa$ or $\R^\numb$.   These don't correspond to tiles in the DPV and so do not appear in the facet substitution (or rather, they appear as holes).   To give the tiling parameters $a_1, a_2, ... a_{\numa}, b_1, ... b_\numb$ we simply project it to the plane via the matrix $\left(\begin{matrix}  a_1 & \cdots & a_\numa &  0 & \cdots  &0 \\ 0 & \cdots & 0 &  b_1 & \cdots & b_\numb \end{matrix}\right)$

It is easy to construct DPVs that have fractures in both the horizontal and vertical directions by varying the direct product of two substitutions that have non-Pisot length expansions.  (The primary example in \cite{Me.Robbie} is of this form.)    By choosing irrationally related side lengths and irrationally related height lengths we obtain a topologically weakly mixing system.

\subsection{Example \ref{Solenoid.extensions}: Solenoid extensions}
\label{Solenoid.analysis}
Here we consider the tiling space $\Tsp_c$ obtained by the compact label set $\lcal = \N_0 \cup \lcal_c$.    For convenience of notation we will make a partial order on the label set: $k < m $ whenever that is true for $k$ and $m$ as integers, and also if $k$ is an integer and $m \in \lcal_c$.   Every integer label is `less than' every compactification label. 
We will see that on the measurable level the $\Tsp_c$s are all the same, but on the topological level they are quite different.

\subsubsection{Minimality} 
By looking at $P_4(k)$ for some $k \ge 4$ we can think about the transition map $\M_{n,N}: \ppp_n \to \ppp_N$.   
$$P_4(k) = P_3(k)P_3(3) = k0102010 \,30102010 $$
We have one 3 and one $k$; a pair of $2$s, four ones, and eight zeroes.    If instead we break it into 1-supertiles, we have  four of type 1,  two of type 2,  and one each of types $3$ and $k$.    We begin to suspect that the number of $n$-supertiles of type $m$, for $m \ge n$, in an $N$-supertile is equal to the number of tiles of type $m$ and has nothing to do with $N$ as long as $N$ is larger than $m$.    If $m$ is larger than $N$, then it is $0$ unless we are in an $N$-supertile that is also of type $m$.    In the former case we see that the number of $n$-supertiles of type $m$ in $P_N(k)$ is $2^{N-n}/2^{m-n+1}$ and we have

$$\M_{n,N}(m,k) =  \left \{\begin{array}{ccccc} 2^{N - (m+1)} & \text{ if } n \le m < k \text{ and } m < N  \\1 & \text{ if } N \le m = k  \\0 & \text{ otherwise }\end{array} \right.$$
 
For a fixed $n$ and $\epsilon > 0$, we can always choose an $N$ such that every $N$-supertile contains a tile arbitrarily close to $P_n(m)$ for any $m \ge n$.   The details of this will depend on the topology of $\lcal$, in particular on the speed with which sequences converge to elements of $\lcal_c$.   This proves primitivity and hence minimality.
 
\subsubsection{Measurable conjugacy of $\Tsp_c$ to the dyadic solenoid}
Every solenoid extension is measurably conjugate to the dyadic solenoid, defined as the inverse limit $\Sol= \lim_{\longleftarrow}(S^1, \times 2)$.   Elements of $\Sol$ take the form $(x_0, x_1, x_2, ...)$ such that for each $n$, $2x_n \equiv x_{n-1} \mod 1$.   We define $f(\T) = (x_0, x_1, x_2, ...)$ as follows.  The position of the origin in the tile containing it determines $x_0$, choosing $x_0 = 0$ if the origin is on the boundary between two tiles.  The origin is either in the left or right half of the 1-supertile that contains it.  If it is in the left half we let $x_1 = x_0/2$ and if it is in the right half or exactly in the middle we let $x_1 = (x_0 + 1)/2$.  Once this is determined we look at whether the origin is in the left half of the 2-supertile containing it or if it is in the right (or exactly in the middle).    In the former case we let $x_2 =  x_1/2$ and in the latter we let $x_2 = (x_1 + 1)/2$.  Since the origin lies somewhere in an $n$-supertile for every $n \in \N_0$ we can inductively define $f$ for all of $\Tsp_c$.

Every tiling $\T \in \Tsp_c$ that doesn't contain any element of $\lcal_c$ is sent to a unique point in $\Sol$.   Since such tilings form a set of full measure we have measurable conjugacy.   The rest of the tilings in $\Tsp_c$ do not map in a one-to-one fashion since a tiling that is made of two infinite-order supertiles maps to an element of the solenoid that knows the position of the origin in its supertile but not what element of $\lcal_c$ the tiling contains. This also explains why the one-point compactification is topologically conjugate to the solenoid.

%since two tilings that are always in the right half of their supertiles and have the same $x_0$ can fail to agree on the right half of $\R$.  Tilings that contain such a  but these form a set of measure 0 since the frequency of any limit label is $0$. 

Since $\Tsp_c$ is measurably conjugate to the dyadic solenoid it must be uniquely ergodic and share its purely discrete measurable spectrum $\Z[1/2]$.   The frequency measures are supported on the subset of the transversal on which $f$ is one-to-one, i.e. no infinite-order supertiles.   However, the nature of the limits does affect the nature of the measurable isomorphism.    A ball of radius $\epsilon$ around a tiling with a given element of $\lcal_c$ at the origin maps onto a set of equal measure in the solenoid, but this set will vary as the elements of $\lcal_c$ do, or when the sequences that converge to them do.

\subsubsection{The topological impact of $\lcal_c$}
To begin we consider the one-point compactification that has elements of $\N_0$ converging monotonically to a single limit point $\ell$.  In this case we can see that $\Tsp_c$ is topologically conjugate to the dyadic solenoid using the argument from the previous section on measurable conjugacy.   In that argument the map is one-to-one for each tiling in $\Tsp_c$ that does not contain an element of $\lcal_c$ but multiple-to-one on those that do.  However, the multiple copies are in correspondence to the number of elements of $\lcal_c$.  Since in this case there is only one element of $\lcal_c$ the map continues to be one-to-one on the exceptional tilings, so it is one-to-one everywhere and thus provides a topological conjugacy to the dyadic solenoid. 

This gives us the unusual situation where the tiling dynamical system is not expansive.  To see this, choose any $\delta > 0$ and find an $N$ such that if $N', N'' > N$, then the distance between prototiles labelled $N'$ and $N''$ is less than $\delta$.   Any two tilings $T$ and $T'$ for which the origin sits in the same location in its $N$-supertile will then satisfy the property that $d(T-x, T'-x) < \delta$ for all $x \in \R$ regardless of the type of the $N$-supertile.  The way to think about this is that each $N$-supertile differs from any other only on the first tile, and those first tiles are labelled greater than $N$ and thus differ by less than $\delta$.

The situation for two-point compactifications becomes more subtle. The version for which all even numbers converge monotonically to $\ell$ and all odd numbers converge monotonically to $\ell'$ is shown to be topologically conjugate to the FLC tiling space generated by the `period-doubling' substitution $X \to YX, Y \to XX$, where $X$ and $Y$ are unit length tiles.
If instead $\N_0$ is partitioned into two sets $S$ and $S'$, one converging to $\ell$ and the other to $\ell'$,  the measures of clopen subsets of the transversal are determined by  $\alpha = \sum_{n \in S} 2^{-n}$, and thus so is the gap-labelling group.  Two tiling spaces, one with $\alpha$ and the other with $\alpha'$ can be compared, and whether or not they are topologically conjugate or even homeomorphic depends on how these constants are related.  See  \cite{Me.Lorenzo.ILCfusion}  for details.

% In order to compare two two-point compactifications, we need look at the values they get for $\alpha$. If they differ by a dyadic rational number then the tiling spaces are seen to be topologically conjugate because the convergent sequences agree on their tails.  If the difference is by a non-dyadic rational then the spaces are not topologically conjugate but are homeomorphic.   If the difference is not rational the tiling spaces are not even homeomorphic.   Nonetheless the spaces all have the same cohomology.

By compactifying with limit sets that have an interesting topology, we can obtain tiling spaces with nontrivial cohomologies in $H^n$.   These examples can be distinguished by their cohomology.

\subsubsection{Transverse topology} The topology of the transversal depends on the topology of $\lcal_c$. 
 Suppose that  $\T,\T'\in \transversal$ are two tilings that have an element of $\lcal_c$ at the origin.   Note that $\T$ and $\T'$ are made of two infinite-order supertiles and thus can only differ at the origin.   So if it were possible to make two open sets in $\transversal$ that disconnected the transversal and contain one each of $\T$ and $\T'$, those sets would correspond to two open sets in $\lcal_c$ that disconnected $\lcal_c$.    Conversely, if $\lcal_c$ can be separated by two open sets, then if $\T$ and $\T'$ contain an element of these sets at the origin, we can partition the rest of the elements of $\lcal$ so that they are close to one or the other of the subsets of $\lcal_c$ to create a pair of open sets that disconnect $\transversal$ and contain $\T$ and $\T'$ respectively.  
 Thus $\transversal$ is disconnected if and only if $\lcal_c$ is.

In particular, the one- and two-point compactifications of the solenoid have totally disconnected transversals.   The one-point compactification is not strongly expansive because it is not expansive.   To see this, consider a fixed $\delta > 0$ and let $d(\T_1,\T_2) \le \delta$, where $\T_1$ and $\T_2$ have an $N$-supertile beginning at the origin, where $N$ is sufficiently large that any two elements of $\lcal_c$ that are greater than $N$ are within $\delta$ in the tile metric.  Then $\T$ and $\T'$ differ at most on the beginning of each $N$-supertile, but these are all within $\delta$ of one another in the patch metric.   Thus $d(\T - k, \T' - k) < \delta$ for all $k \in \Z$ and the dynamics on the transversal are not expansive (and indeed the overall dynamical system is not expansive).  Thus Theorem \ref{One-D-FLCresult} confirms that the one-point compactification is not homeomorphic to any FLC tiling system. 

By a similar argument we can show that the two-point compactification is homeomorphic to a FLC tiling space by Theorem \ref{One-D-FLCresult}.   This is because even if two tilings very nearly agree on the central $N$-supertile, each of them are bound to have $N$-supertiles whose first tiles are close to distinct elements of $\lcal_c$ and thus not to each other.

If $\lcal_c$ is not totally disconnected, then its solenoid extension is not homeomorphic to a one-dimensional tiling space with finite local complexity.

\subsubsection{Complexity}  \label{Solenoid.Complexity}
This is the topological invariant that lets us see that the one-point compactification, while failing to have finite local complexity, does so in a way that is much less complex than a non-periodic FLC tiling space.  In order to be specific about complexity let us use $N_1(\epsilon, L)$, the minimum number of $d_L$- balls of radius $\epsilon$ balls it takes to cover $\Tsp$.    Take $N \in \N_0$ such that any element $n \ge N$ is within $\epsilon$ of the limit point $\infty$ in $\lcal$.  Every $n$-supertile with $n \ge N$ is within $\epsilon $ of any other in the patch metric.  This means that that the system is basically periodic with period $2^N$, up to $\epsilon$, and $N_1(\epsilon, L) $ is bounded multiple of $2^N$ once $L$ is sufficiently large relative to $\epsilon$.  This means that the system has bounded complexity in the sense of Section \ref{entropy.complexity}.

No matter how complicated $\lcal_c$ is, the complexity cannot become too high and there will always be zero entropy.  This is because, for any $\epsilon > 0$, an $\epsilon-$cover of $\lcal$ into $N$ elements effectively reduces $\Tsp_c$ to a solenoid extension on $N$ elements.

\section{Important questions}

There are two main types of questions that seem to be important in the study of tiling spaces with infinite local complexity.   One is to ask, what properties of these spaces
are invariant under homeomorphism or some other type of conjugacy? Another is to ask, how do combinatorial and geometric factors influence the dynamical, measure-theoretic, topological, or complexity properties?   

It was already known from \cite{Radin-Sadun} that infinite local complexity is not a topological invariant.   The solenoid extensions considered here show that infinite local complexity is not preserved by measurable or topological conjugacy either.  So what are the important classes of infinite local complexity?
Even though FLC isn't an invariant property, there should be some properties that guarantee that a tiling space is topologically conjugate, or measurably conjugate, or just homeomorphic to one that is FLC.  In one dimension, Theorem \ref{One-D-FLCresult} gives a topologically-invariant characterization of this class.  In higher dimensions geometry becomes an obstacle and a direct generalization is unlikely to be sufficient: the methods used in the one-dimensional proof don't produce valid tilings in higher dimensions.   So the topological classification of FLC in 2 or higher dimensions remains unresolved, along with the dynamical classification in any dimension.

The property of being constructed from a finite set of prototiles is also not invariant.    A simple example is the tiling space made out of unit square tiles that lie in rows that are offset by random amounts.  We can introduce an infinite number of tile types symbolically, by constructing an infinite label set given by all the possible coronas, which produces
a topologically conjugate tiling space.    Alternatively, we can introduce an infinite number of tile types geometrically by taking the space of dual tilings.   If we use the centers of the tiles as the vertices of the dual prototiles, the dual prototile set consists of one square and infinitely many triangles  of equal areas but different angles, and is not even compact.  Since many important tiling models assume a finite prototile set, knowing properties that guarantee that a space is equivalent to one with this property seems essential.
  
The combinatorics in the tilings in the last paragraph are all basically the same and depend only on whether an offset between rows is zero or not.   Thus the combinatorics of that particular tiling space do not reveal information about local complexity.  By way of contrast, the combinatorics of the DPV of example \ref{DPVexample} will determine whether the tiling space has finite or infinite local complexity.  The interplay between combinatorics and complexity is especially evident in the DPV case.   If one does not vary the direct product substitutions, no geometric factors can influence the local complexity of the tiling space:  it is guaranteed to have FLC.   But if one does vary the direct product to produce a DPV, then the local complexity depends on several factors:   the inflation constant, the specific combinatorics of the substitution, and the sizes of the tiles.

More generally, in the category of primitive fusion or substitution tilings the interplay between combinatorics, geometry, number theory, topology, and dynamics is an important area of investigation.   It is known that some form of Pisot condition on the expansion factor has a profound effect on dynamics and complexity (cf. \cite{Sol.self.similar,Me.Robbie,Clark-Sadun}).   Such Pisot conditions enforce a rigidity on the tiling space; the dynamics and complexity are severely restricted.   In the absence of a Pisot condition, combinatorics and geometry can influence the tiling space in numerous ways.   In order to understand certain tiling models it is important that we understand the nature of this influence.

%
%
%
%\subsection{ILC planar tilings with infinite prototile set}
%
%\begin{ex} blurry crystal:  put a random variable around each lattice point.
%
%\end{ex}
%
%\begin{ex}{\em Ir-rep-tiles}
%
%\end{ex}
%
%\begin{ex}{\em Generalized pinwheel.}  30-60-90 generalized pinwheel or some other generalized pinwheel with finitely many angles but infinitely many tile sizes.
%
%\end{ex}
%
%\begin{ex}{\em The dual tiling of an ILC DPV.}
%
%\end{ex}

\end{document}